\newtheorem{corollary}{Corollary}
\newtheorem{lemma}{Lemma}
\newtheorem{proof}{Proof}
\providecommand{\mat}[1]{\bm{#1}}%
\renewcommand{\vec}[1]{\mathbf{#1}}
\providecommand{\mA}{\ensuremath{\mat{A}}}
\providecommand{\mD}{\ensuremath{\mat{D}}}
\providecommand{\mP}{\ensuremath{\mat{P}}}
\providecommand{\mQ}{\ensuremath{\mat{Q}}}
\providecommand{\mR}{\ensuremath{\mat{R}}}
\providecommand{\mS}{\ensuremath{\mat{S}}}
\providecommand{\mV}{\ensuremath{\mat{V}}}
\providecommand{\va}{\ensuremath{\vec{a}}}
\providecommand{\vb}{\ensuremath{\vec{b}}}
\providecommand{\vc}{\ensuremath{\vec{c}}}
\providecommand{\vg}{\ensuremath{\vec{g}}}
\providecommand{\vh}{\ensuremath{\vec{h}}}
\providecommand{\vu}{\ensuremath{\vec{u}}}
\providecommand{\vx}{\ensuremath{\vec{x}}}
\providecommand{\vz}{\ensuremath{\vec{z}}}
\title{
    Effectively Subsampled Quadratures for \\ Least Squares Polynomial Approximations
}
\author{Pranay Seshadri \thanks{Research Associate, Department of Engineering, University of Cambridge, U.K., \texttt{ps583@cam.ac.uk}}, Akil Narayan \thanks{Assistant Professor, Department of Mathematics, University of Utah, Salt Lake City, U.S.A.}, Sankaran Mahadevan \thanks{John R. Murray Sr. Professor, Department of Civil and Environmental Engineering, Vanderbilt University, Nashville, U.S.A.} }
\begin{document}
%todo to delete
%\color{solarized-base00}

\maketitle

\begin{abstract}
This paper proposes a new deterministic sampling strategy for constructing polynomial chaos approximations for expensive physics simulation models. The proposed approach, \emph{effectively subsampled quadratures} involves sparsely subsampling an existing tensor grid using QR column pivoting. For polynomial interpolation using hyperbolic or total order sets, we then solve the following \emph{square} least squares problem.  For polynomial approximation, we use a column pruning heuristic that removes columns based on the highest total orders and then solves the \emph{tall} least squares problem. While we provide bounds on the condition number of such tall submatrices, it is difficult to ascertain how column pruning effects solution accuracy as this is problem specific. We conclude with numerical experiments on an analytical function and a model piston problem that show the efficacy of our approach compared with randomized subsampling. We also show an example where this method fails.
\end{abstract}

\section{Introduction \& motivation}
\label{sec:intro}
\noindent Polynomial chaos is a powerful tool for uncertainty quantification that has seen widespread use in numerous disciplines \cite{Seshadri_LEAK,  Aeroelas, mechanical, chemical}. It approximates a model's response with respect to uncertainty in the input parameters using orthogonal polynomials. One important challenge in computing these polynomial approximations lies in determining their coefficients. Standard approaches to compute these coefficients include tensor grids as in Figure~\ref{figure_intro}(a) and sparse grids in (b). In this paper we present a deterministic strategy to construct stable polynomial approximations by \emph{subsampling points} from a tensor grid, as in Figure~\ref{figure_intro}(c) . 

The motivation for moving away from tensor product spaces is that the number of points grow exponentially with dimension. While sparse grids \cite{Sparse1, Sparse2, SPAM} are one way forward, they still restrict users to very specific index sets---even with their various growth rules. Least squares based methods on the other hand offer more flexibility. The idea of employing least squares techniques in a polynomial approximation context is not a new idea, but recent theoretical results \cite{Zhou} motivate the idea of randomized or \textit{subsampled} quadratures. In a nutshell the idea is to evaluate the simulation at only a small subset of points from a suitable tensorial grid,  and to subsequently construct an approximation using either compressed sensing \cite{Doos_1, Hampton, Peng, Tang} or least squares. The goal is to subsample in a way such that the subsample-constructed surrogate approximates the surrogate that would obtained by using the all the points from the tensor grid, by minimizing the mean-square error in the coefficients. After subsampling, the approximation scheme we focus on is a least-squares-type approach; subsampling approaches for least-squares have received little attention \cite{Zhou} compared to compressive sampling.

\begin{figure}
\begin{subfigmatrix}{3}% number of columns
\subfigure[]{\includegraphics{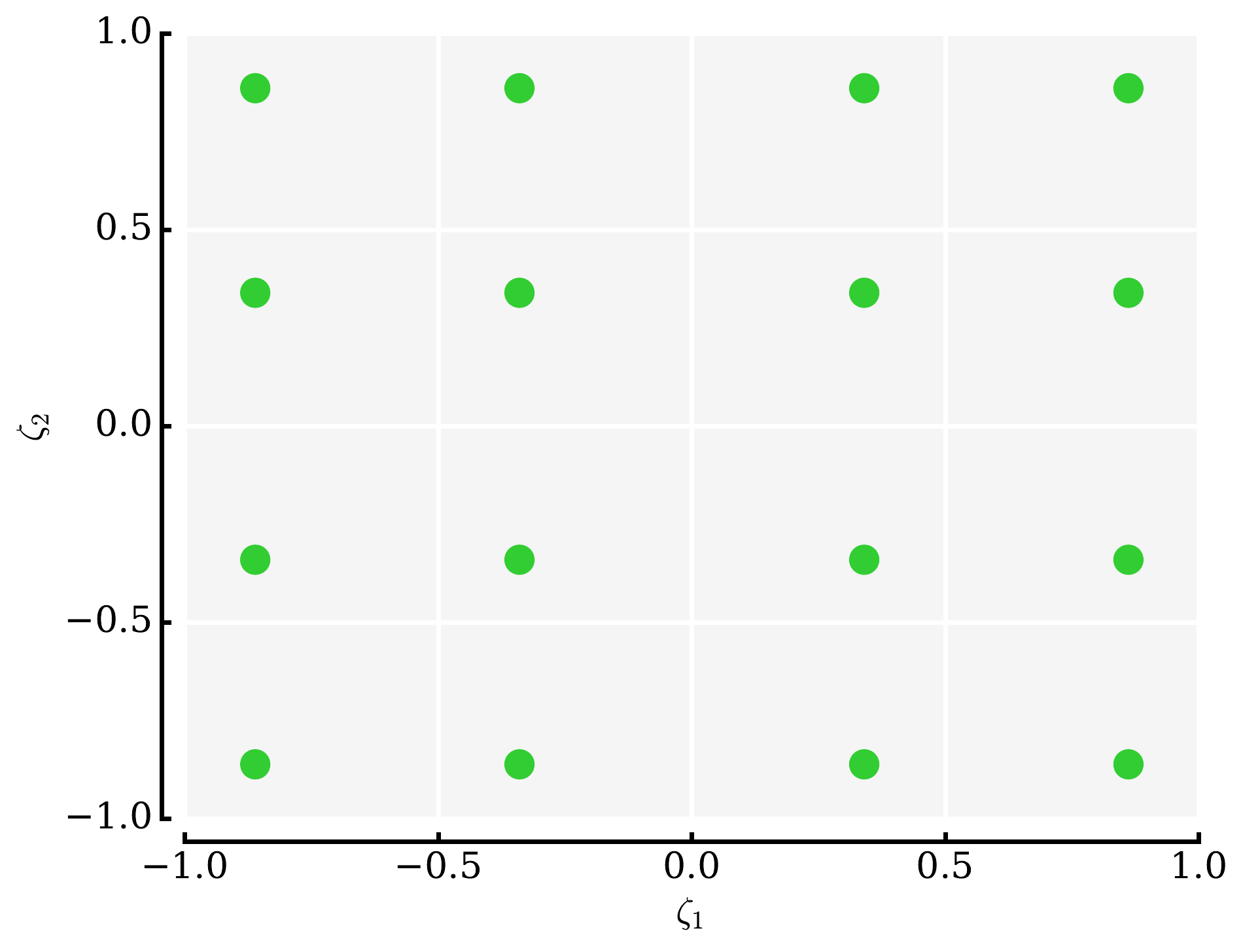}}
\subfigure[]{\includegraphics{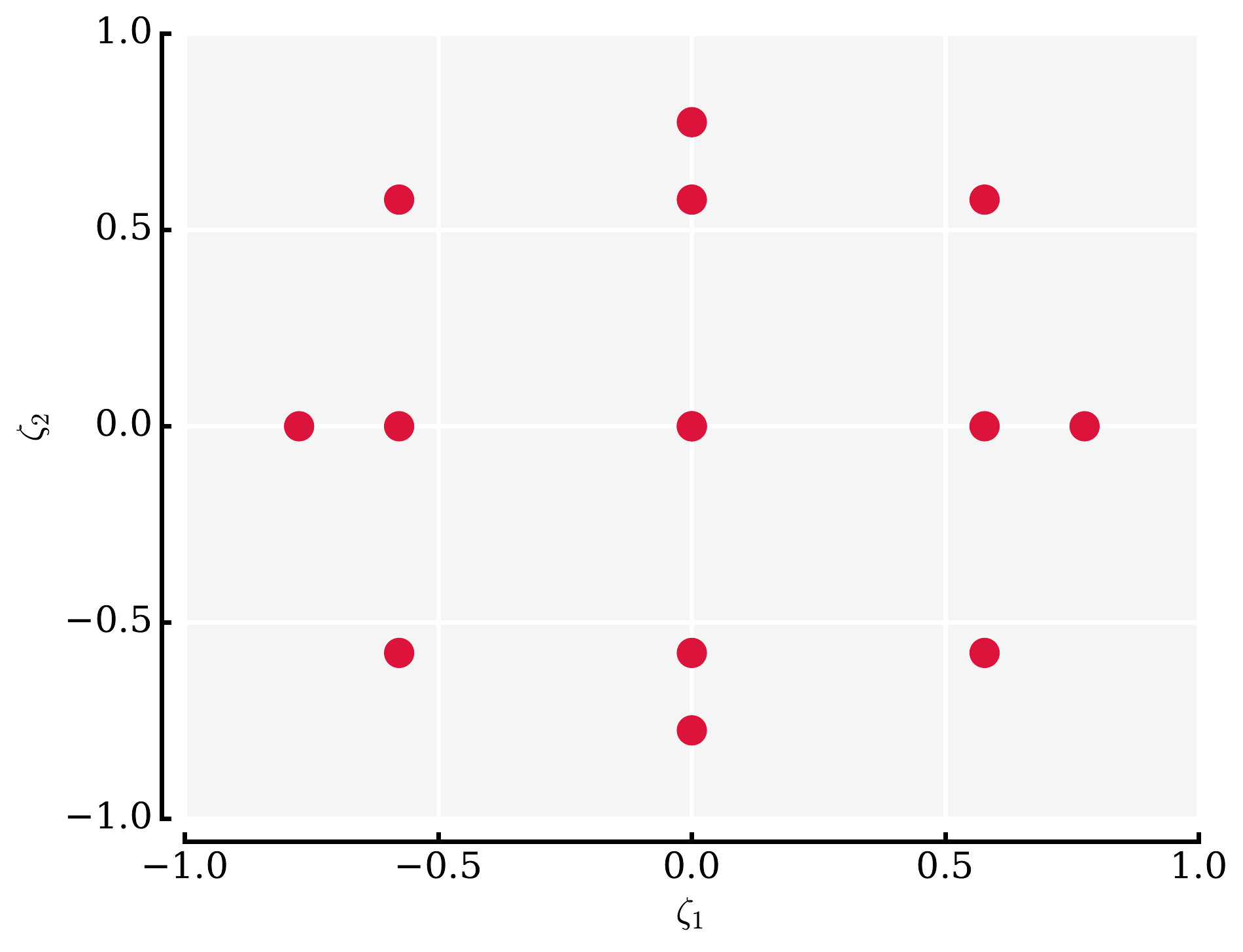}}
\subfigure[]{\includegraphics{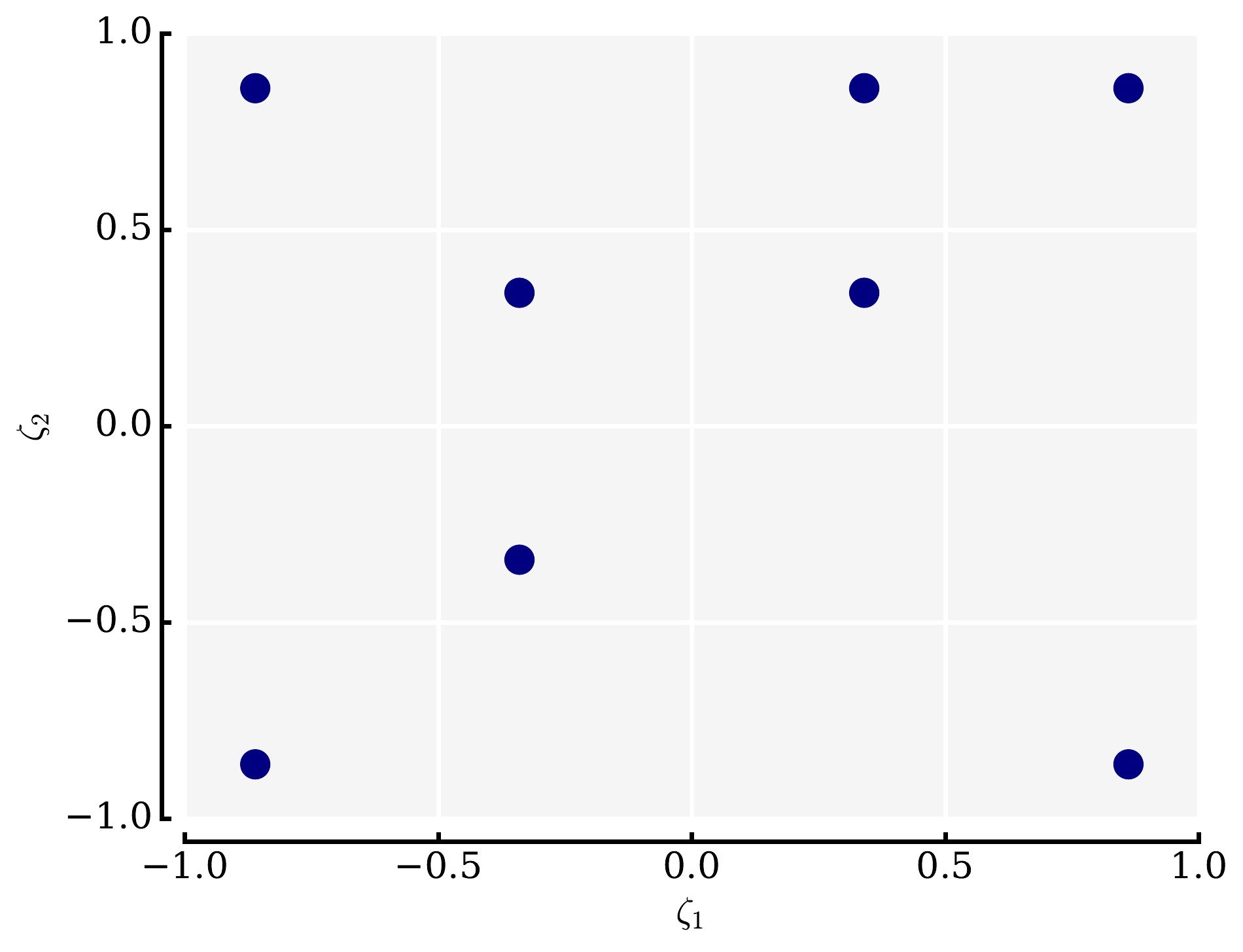}}
\end{subfigmatrix}
\caption{Sample stencils for (a) Tensor grid; (b) Sparse grid and (c) Effectively subsampled quadratures, with a maximum univariate degree of 4 using Gauss-Legendre quadrature points.}
\label{figure_intro}
\end{figure}

Despite ease of implementation, discrete least-squares has known stability issues. For instance, when using abscissae associated with Newton-Cotes quadrature (equidistant point sets), it is relatively unstable even for an infinitely smooth noiseless function \cite{platte_impossibility_2011}. Recent theoretical work \cite{Migliorati_1,Migliorati_2, Zhou, Cohen,narayan_christoffel_2014} has centered around determining the stability conditions for least squares, when using \emph{independent and identically distributed} (iid) or random sampling. In Cohen et al. \cite{Cohen} the authors analyze univariate polynomials and observe for approximating an m-$th$ order polynomial approximately $cm^2$ points are required when sampling randomly from a uniform distribution on the inputs, or approximately $cm \textrm{log}(m)$ points when sampling from a Chebyshev distribution, for some constant $c$. Theoretical extensions to multivariate polynomial spaces can be found in Chkifa et al. \cite{Chkifa}, and with applications to multivariate orthogonal polynomial expansions in tensor-product and total-order index spaces in \cite{Zhou}. 

In this paper we introduce a new approach for deterministically subsampling quadratures in the context of least squares approximations. Given a polynomial subspace and a tensor product quadrature grid, the central idea is to select a number of subsamples from the grid equal to the dimension of the subspace using a QR factorization column pivoting strategy. Further pruning of the polynomial subspace is performed via heuristics. We remark here that good performance of our method depends on the pruning strategy adopted, and in general it is difficult to develop rigorous bounds.

Details of the approach are in section~\ref{sec:algo} with a discussion in section~\ref{sec:discussion}. This is followed by numerical examples in sections~\ref{sec:examples} and ~\ref{sec:numex}. All our results, and code to produce them, can be found at: www.effective-quadratures.org/papers. These codes use routines from our effective-quadratures toolkit \cite{Seshadri2017}. 

\subsection{Preliminaries \& notation}
Let $\bm{\zeta}=(\zeta^{(1)}, \ldots, \zeta^{(d)} ) $ be a $d$-dimensional vector of mutually independent random variables with joint probability density $\bm{\rho}$ and marginal densities $\rho_i$ related by $\bm{\rho}(\bm{\zeta})=\prod_{i=1}^d \rho_i\left(\zeta^{(i)}\right)$ defined on $\mathbb{R}^d$.
\subsection{On polynomials}\label{sec:polynomials}
Let $\left\{ \psi_j^{(i)} \right\}_{j=0}^\infty$ denote a family of polynomials $L^2$-orthogonal on $\mathbb{R}$ when weighted by the density $\rho_i$:
\begin{align}
  \int_\mathbb{R} \psi_i^{(k)}(s) \psi_j^{(k)}(s) \rho_k(s) ds = \mathbb{E} \left[ \psi_i^{(k)} \left(\zeta^{(k)}\right) \psi_j^{(k)} \left(\zeta^{(k)}\right) \right] = \delta_{i,j},
\end{align}
where $\delta_{i,j}$ is the Kronecker delta. Existence of such a family is ensured under mild assumptions on $\rho_i$ \cite{gautschi_orthogonal_2004}; the $\rho_i$-weighted $L^2$-completeness of the polynomial family can be established under some additional technical assumptions \cite{lubinsky_survey_2007,ernst_convergence_2012}.

A multivariate polynomial $\bm{\psi_j}: \mathbb{R}^d \rightarrow \mathbb{R}$ can be defined as a product of univariate polynomials,
\begin{align}
  \bm{\psi_{j}}\left(\bm{\zeta}\right)&=\prod_{k=1}^{d}\psi^{(k)}_{j_{k}}\left(\zeta^{(k)}\right), & \bm{j} &= \left(j_1, \ldots, j_d \right) \in \mathbb{N}_0^d,
\end{align}
where $\bm{j}$ is a multi-index that denotes the order (degree) of $\bm{\psi_j}$ and its composite univariate polynomials $\psi^{(k)}_{j_k}$. The family $\left\{\bm{\psi_j} \right\}_{\bm{j} \in \mathbb{N}_0^d}$ defined in this way is mutually orthogonal in $L^2$ weighted by $\bm{\rho}$.

For computational purposes, we require a finite number of polynomials $\bm{\psi_j}$. This finite set is chosen by restricting the multi-index $\bm{j}$ to lie in a finite multi-index set $\mathcal{J}$.  There are four well-known multi-index sets $\mathcal{J}$ that have proven fruitful for parametric approximation: tensor product index sets, total order index sets, hyperbolic cross spaces \cite{hyperbolic_cross, Sparse2} and hyperbolic index sets \cite{Sudret_hyperbolic}. Each of these index sets in $d$ dimensions is well-defined given a fixed $k \in \mathbb{N}_0$, which indicates the maximum polynomial degree associated to these sets. 

Isotropic tensor product index sets consist of multi-indices satisfying $\max_k j_k \leq k$, and have a cardinality (number of elements) equal to $(k+1)^d$. Total order index sets contain multi-indices satisfying $\sum_{i=1}^d j_{i} \leq k$. Loosely speaking, total order indices disregard some higher order interactions between dimensions present in tensorized spaces, and a total order index set $\mathcal{J}$ has cardinality
\begin{equation}
\left| \mathcal{J} \right|=\left(\begin{array}{c}
k+d\\
k
\end{array}\right). 
\end{equation}
Hyperbolic cross sets contain indices governed by the rule $\prod_{i=1}^d\left(j_{i}+1\right)\leq k+1$, and prune even more tensorial interaction terms than total order index spaces. The cardinality of this last index set is approximately $(k+1)(1+\log(k+1))^{d-1}$ \cite{Zhou}. Finally, a hyperbolic index set contain indices that satisfy
\begin{equation}
\left(\sum_{i=1}^{d}j_{i}^{q}\right)^{1/q} \leq k,
\label{eq:hyperbolic-space}
\end{equation}
where $q$ is a user-defined constant that can be varied from 0.2 to 1.0. When $q=1$ the hyperbolic index space is equivalent to a total order index space, while for values less than unity higher-order interactions terms are eliminated \cite{Sudret_hyperbolic}. In the numerical examples in this paper we will form approximations from the hyperbolic index space (not the hyperbolic cross set). 

\subsection{On quadrature rules}\label{sec:quadrature}
We assume existence of a quadrature rule $\left\{ \left( \boldsymbol{\zeta}_i, \omega^2_i \right) \right\}_{i=1}^m$, with non-negative weights $\omega_i^2$, such that
\begin{align}\label{eq:quadrature-accuracy}
  \sum_{i=1}^m \omega^2_i \boldsymbol{\phi}_{\boldsymbol{j}}\left( \boldsymbol{\zeta}_i \right) \boldsymbol{\phi}_{\boldsymbol{\ell}}\left( \boldsymbol{\zeta}_i \right) = \int_{\mathbb{R}^d} \boldsymbol{\phi}_{\boldsymbol{j}}\left( \boldsymbol{\zeta}\right) \boldsymbol{\phi}_{\boldsymbol{\ell}}\left( \boldsymbol{\zeta}\right) \boldsymbol{\rho}(\boldsymbol{\zeta}) dx{\boldsymbol{\zeta}} &= \delta_{\boldsymbol{\ell}, \boldsymbol{j}}, & \boldsymbol{j}, \boldsymbol{\ell} &\in \mathcal{J}.
\end{align}
Thus the choice of quadrature rule is intimately tied to the choice of $\mathcal{J}$. Ideally, the cardinality $m$ of this quadrature rule should be as small as possible. In one dimension, we can achieve equality in the above expression with $m = \left|\mathcal{J}\right|$ for essentially any density $\rho$ by using a \textit{Gaussian} quadrature rule \cite{freud_orthogonal_1971}. However, rules of this optimal (smallest) cardinality are not known in the multivariate setting for general $\boldsymbol{\rho}$ and $\mathcal{J}$. The construction of quasi-optimal multivariate quadrature rules, even over canonical domains, is a challenging computational problem \cite{taylor_cardinal_2007} and no universal solutions are currently known.

On tensor-product domains (as is assumed in this paper) one quadrature rule satisfying \eqref{eq:quadrature-accuracy} can be constructed by tensorizing univariate rules. For example, \eqref{eq:quadrature-accuracy} holds if the univariate rules are Gaussian quadrature rules with sufficiently high accuracy. Since the cardinality of the resulting tensorial rule grows exponentially with dimension $d$, evaluating the model over a full tensor-product quadrature rule quickly becomes infeasible. In this paper, we will use a tensorial Gauss quadrature rule with high enough accuracy to ensure \eqref{eq:quadrature-accuracy}, but in principle it is also reasonable to apply our approach when the first equality in \eqref{eq:quadrature-accuracy} is only approximate.

This motivates the goal of this paper: prune a full tensorized quadrature rule $\left\{ \left( \boldsymbol{\zeta}_i, \omega^2_i \right) \right\}_{i=1}^m$ via a subsampling procedure so that the subsampled grid has an acceptable cardinality while hopefully maintaining the accuracy properties associated to an approximation using the index set $\mathcal{J}$.

\subsection{On matrices}
Matrices in this paper are denoted by upper case bold letters, while vectors are denoted by lower case bold letters. For a matrix, $\mD \in \mathbb{R}^{m \times n}$ with $m \geq n$, singular values are defined by $\sigma_i\left(\mD \right)$, with $i=1,\ldots, n$. Unless explicity stated otherwise, the singular values are arranged in descending order, i.e, $\sigma_1\left(\mD \right) \geq \ldots \geq \sigma_n\left(\mD \right)$. The $\ell^2$ condition number of $\mD$ is denoted $\kappa\left(\mD \right)$ and is the ratio of the largest singular value to the smallest. The singular values of $\mD$ coincide with those of $\mD^T$.

\section{Effectively subsampled quadratures}
\label{sec:algo}
In this section we describe polynomial least squares via \emph{effectively subsampled quadratures}. To aid our discussion, the overall strategy is captured in Figure~\ref{figureimp}; we describe the details below.

\begin{figure}
\centering
\includegraphics{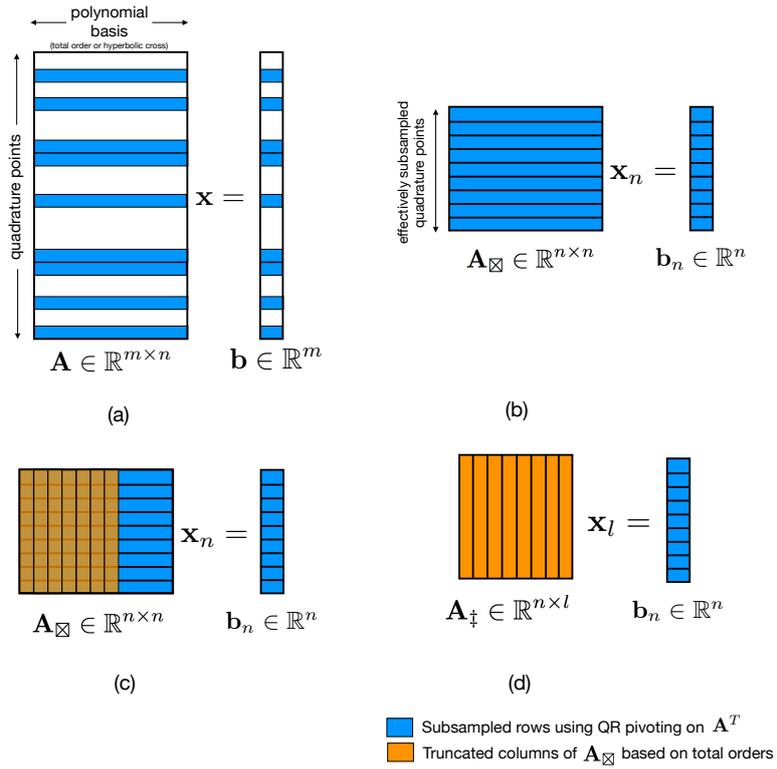}
\caption{Step-by-step outline of the effectively subsampled quadrature technique for computing polynomial least squares approximations: (a) Setting up the $\mA$ matrix; (b) QR factorization with column pivoting on $\mA^T$; (c) Column pruning; (d) Solving the least squares problem.}
\label{figureimp}
\end{figure}

\subsection{Setting up the $\mathbf{A}$ matrix}
Let $\mA \in \mathbb{R}^{m \times n}$ be a matrix formed by evaluating $n$ multivariate orthonormal polynomials (constructed as described in Section \ref{sec:polynomials}) at a tensor grid formed of $m$ quadrature points (described in Section \ref{sec:quadrature}). Individual entries of $\mA$ are given by
\begin{align}
  \mA\left(i,\boldsymbol{j}\right)&=\omega_{i}\bm{\psi}_{}\left(\bm{\zeta}_{i}\right), & \boldsymbol{j}\in\mathcal{J}, \hskip 5pt i=1, \ldots, m.
\end{align}
where in this paper we implicitly assume that the index set $\mathcal{J}$ is a hyperbolic\footnote{Either a hyperbolic cross space or a hyperbolic space} or total order index set with the condition that $\left|\mathcal{J}\right|=n \ll m$. We assume the quadrature rule is accurate enough so that \eqref{eq:quadrature-accuracy} holds. We consider $\mA$ a matrix by assuming a linear ordering of the elements in $\mathcal{J}$; the particular ordering chosen may be arbitrary in the context of this paper.

We define $\vb \in \mathbb{R}^{m}$ to be the vector of weighted model evaluations at the $m$ quadrature points, where individual entries of $\vb$ are given by
\begin{equation}
\vb(i)=\omega_{i}f\left(\bm{\zeta}_i\right),
\label{2}
\end{equation}
where $f\left(\cdot\right)$ represents the quantity of interest from our computational model; we seek to approximate this with $g(s)$
\begin{equation}
f\left(s\right)\approx g(s) = \sum_{i\in\mathcal{J}}^{n}x_{i} \bm{\psi_{i}}\left(s\right).
\label{main_poly_equ}
\end{equation}
This is equivalent to solving the least squares problem $\mA \vx = \vb$ for the coefficients $\vx \in \mathbb{R}^{n}$. This requires our model to be evaluated at each tensor grid quadrature point. A non-deterministic (i.e., randomized) approach to reduce the cost associated with this least squares problem is to randomly subsample tensor grid quadrature points as outlined in \cite{Zhou}. This strategy requires a reduction in the number of basis terms in $\mA$ to promote stability. We make specific comments regarding the randomized strategy in the numerical studies section of this paper. To contrast, our approach in this paper is deterministic and rooted in a heuristic that is tailored for least squares problems. 

\subsection{QR factorization with column pivoting on $\mathbf{A}^T$}
To reduce the cost associated with evaluating our model at each point in a tensor grid, we utilize QR column pivoting---a well-known heuristic for solving rank deficient least squares problems. QR column pivoting works by (i) determining the numerical rank $r<n$ of an $m$-by-$n$ matrix, and (ii) permuting columns of the matrix such that the first $r$ columns are linearly independent \cite{Hansen}. Here, we apply this heuristic for subselecting rows of $\mA$. Let the QR column pivoting factorization of $\mA^T$ be given by
\begin{equation}
\mA^{T}\mP=\mQ\left(\begin{array}{cc}
\mR_{1} & \mR_{2}\end{array}\right)
\label{equ_QR}
\end{equation}
where $\mQ \in \mathbb{R}^{n \times n}$ is orthogonal, $\mR_{1} \in \mathbb{R}^{n \times n}$ is nonsingular\footnote{By assuming \eqref{eq:quadrature-accuracy}, $\boldsymbol{A}$ must be of full rank, equal to $\min \{m,n\} = n$. Thus $\boldsymbol{R}_1$ also has rank $n$.} and upper triangular with positive diagonal elements, and $\mR_{2} \in \mathbb{R}^{n\times (m-n)}$. The matrix $\mP$ is a \emph{permutation matrix} that permutes columns of $\mA^T$ such that the diagonal entries of $\mR_{1}$ are greedily maximized. It should be noted that the factorization in~\eqref{equ_QR} is not necessarily unique as there may be numerous ways to select $n$ linearly independent columns of $\mA$ \cite{Bjorck}. To determine precisely which rows of $\mA$ to subselect (equivalent to determining which points to subsample) we define a vector $\bm{\pi}$ that converts the pivots encoded in the matrix $\mP$ to the specific rows in $\mA$ by
\begin{equation}
\bm{\pi}=\mP^{T}\vu,
\label{4}
\end{equation}
where $\vu = (1,2,\ldots,m)^T$ is a vector of integers from 1 to $m$. The vector $\boldsymbol{\pi}$ contains ordered rows of $\boldsymbol{A}$ that are subselected via the $Q R$ factorization. For clarity let $\bm{\pi}_n = \bm{\pi}(1:n)$ be the first $n$ entries of $\bm{\pi}$, and the operator $\mathcal{G}_{\bm{\pi}_{n}}$ that selects the rows indexed by $\bm{\pi}_n$. We define
\begin{equation}
\mA_{\boxtimes} = \mathcal{G}_{\bm{\pi}_{n}}\left(\mA\right)
\label{5}
\end{equation}
where $\mA_{\boxtimes} \in \mathbb{R}^{n \times n}$ (see Figure \ref{figureimp}(b)). For ease in notation we define column-wise vectors of $\mA^T$ using $\vc$ as follows:
\begin{equation}
\mA^{T}=\left(\begin{array}{ccc}
- & \va_{1}^{T} & -\\
 & \vdots\\
- & \va_{n}^{T} & -
\end{array}\right)=\left(\begin{array}{ccc}
| &  & |\\
\vc_{1} & \ldots & \vc_{n}\\
| &  & |
\end{array}\right).
\end{equation}

The QR pivoting algorithm used in this study---shown in Algorithm 1---is based on the work of Dax \cite{Dax} and uses modified Gram-Schmidt QR factorization. As shown in lines 14-15 of the algorithm, the vector $\vc_j$ is orthogonalized by iteratively projecting it on to the subspace orthogonal to \textsf{span}$(\vg_1, \ldots, \vg_{k-1})$, where $\vg_k=\vc_k / \left\Vert \vc_{k}\right\Vert _{2}$ \cite{Hansen}. 
\begin{algorithm}
  \caption{QR with column pivoting on $\mA^{T}=\left(\vc_{1},\ldots,\vc_{m}\right)\in \mathbb{R}^{n \times m}$ where $m > n$}\label{euclid}
  \begin{algorithmic}[1]    
    \Procedure{QR Column Pivoting}{$\mA^T$}
    \State Compute and store column norms of $\mA^T$ in \texttt{colnorms} 
    \State Declare permutation vector \texttt{$\pi$=1:m}
      \For{ \texttt{k=1:n} } 
      	\State Store \texttt{max(colnorms(k:n))} index as \texttt{jmax}
      	\State Set \texttt{jmax = jmax + (k-1)}
      	
      	\If{$k$ is not equal to jmax}
      		\State Swap $\vc_{k}$ with $\vc_{jmax}$ \Comment{Swaping}
      		\State Swap \texttt{colnorms(k)} with \texttt{colnorms(jmax)} 
      		\State Swap \texttt{$\pi$(k)} with \texttt{$\pi$(jmax)}
      	\EndIf
      	
      	\If{$k$ is not equal to n} \Comment{Orthogonalization}
        	\For{\texttt{j=k+1:n}}
        		\State $\vg= \vc_k / \left\Vert \vc_{k}\right\Vert _{2}$
        		\State $\vc_j = \vc_j - \vg^T \vc_j \vg$
        		\State Update \texttt{colnorms(j)}
        	\EndFor
        \EndIf	

        \If{$k$ is not equal to 1} \Comment{Reorthogonalization}
    		\For{ \texttt{i=1:k-1} }
    		\State $\vh= \vc_i / \left\Vert \vc_{i}\right\Vert _{2}$
    		 \State $\vc_k = \vc_k - \vh^T \vc_k \vh$ 
    		 \EndFor 
    	\EndIf 
      \EndFor
      \State \textbf{return} \texttt{$\pi$}
    \EndProcedure
  \end{algorithmic}
\end{algorithm}
Householder QR pivoting (see page 227 of \cite{Golub_book}) may also be used instead. Both algorithms require an initial computation of the column norms followed by subsequent updates after iterative orthogonalization. As Bj{\"o}rck \cite{Bjorck} notes, once the initial column norms have been computed, they may be updated using the identity
\begin{equation} \label{equ_rule}
\begin{split}
\left\Vert \vc_{j}^{\left(k+1\right)}\right\Vert _{2} & = \sqrt{\left\Vert \va_{k}^{(k)}\right\Vert _{2}^{2}-\left(\vg^{T}\vc_{j}^{\left(k\right)}\right)^{2}} \\
 & = \left\Vert \vc_{k}^{(k)}\right\Vert _{2}\left(1-\left(\frac{\vg^{T}\vc_{j}^{(k)}}{\left\Vert \vc_{k}^{(k)}\right\Vert _{2}}\right)^{2}\right)^{1/2},
\end{split}
\end{equation}
instead of directly computing the column norms. The above identity is applied in line 16 of Algorithm 1. While~\eqref{equ_rule} reduces the overhead of pivoting from $\mathcal{O}\left(mn^2\right)$ flops to $\mathcal{O}\left(mn\right)$ flops \cite{Golub_book}, there are some sailent computational issues that need to be considered when using such an updating rule. (See page 6 of \cite{Dax}.) A small but notable difference in our implementation of the QR with column pivoting algorithm is that our for-loop terminates at $\texttt{k=n}$, which is sufficient for computing the $n$ pivots we need. A python implementation of the above algorithm is included in our \texttt{effective-quadratures} toolkit, and is utilized in all the numerical studies in this paper. For QR factorizations with column pivoting, there are comprehensive \texttt{LAPACK} routines, such as \texttt{DGEQP3} \cite{Greg}.

\subsubsection{Relation to rank revealing QR factorizations}
The factorization in~\eqref{equ_QR} is called a \emph{rank revealing QR (RRQR)} factorization if it satisfies the property that $\sigma_{min}(\mR_1) \geq \sigma_{k}(\mA^T)/p(n,m)$, where $p(n,m)$ is a function bounded by a lower order polynomial in $n$ and $m$. The Businger and Golub Householder QR with pivoting algorithm \cite{Golub_book}, the Chandrasekaran and Ipsen algorithm \cite{Ipsen} and that of Golub, Klema and Stewart \cite{GKS} fall into this category. Bounds on all $n$ singular values of $\mR_1$---and not just the minimum---can be obtained when using algorithms that yield a \emph{strong RRQR} factorization; a term coined by Gu and Eisenstat \cite{Gu}. The latter also provide an efficient algorithm for computing a strong RRQR. Broadbent et al. \cite{Broadbent} follow this work and proved that a strong RRQR applied to matrices with more columns than rows---as in the case for our QR factorization with $\mA^T$---yields the following identity
\begin{equation}
\sigma_{i}\left(\mR_{1}\right)\geq\frac{\sigma_{i}\left(\mA\right)}{\sqrt{1+\delta^{2}n\left(m-n\right)}} \; \;  \; \textrm{for} \; \; \; 1 \leq i \leq n,
\label{broadbent}
\end{equation}
for some constant $\delta > 1$. As the singular values of $\mR_1$ are equivalent to those of $\mA_\boxtimes$, this imples that singular values $\sigma_i(\mA_\boxtimes)$ lie between $\sigma_i(\mA)$ and the right-hand-side of~\eqref{broadbent}. We briefly analyze the stability of solving a linear system involving the rank-$n$ matrix $\boldsymbol{A}_\boxtimes$ by bounding its condition number relative to the condition number.
\begin{lemma}\label{lemma_1}
With $\delta$ the parameter in the inequality \eqref{broadbent}, and $\kappa(\cdot)$ the 2-norm condition number of a matrix, then
  \begin{align}\label{eq:conditioning-lemma-result}
    \kappa\left(\boldsymbol{A}_{\boxtimes} \right) \leq \kappa\left(\boldsymbol{A} \right) \sqrt{ 1 + \delta^2 n (m-n) } 
  \end{align}
\end{lemma}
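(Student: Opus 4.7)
The plan is to bound the largest and smallest singular values of $\mA_\boxtimes$ separately and then take their ratio. Since $\kappa(\mA_\boxtimes) = \sigma_1(\mA_\boxtimes)/\sigma_n(\mA_\boxtimes)$, and since $\mA_\boxtimes$ is square of size $n \times n$, I need an upper bound on $\sigma_1(\mA_\boxtimes)$ and a lower bound on $\sigma_n(\mA_\boxtimes)$, both in terms of the corresponding singular values of $\mA$.

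For the smallest singular value, the bound is essentially handed to us. The QR factorization in \eqref{equ_QR} permutes columns of $\mA^T$, so the first $n$ selected columns of $\mA^T$ (equivalently, the $n$ selected rows of $\mA$ indexed by $\bm{\pi}_n$) form exactly $\mA_\boxtimes^T$, and this equals $\mQ \mR_1$ up to the orthogonal factor $\mQ$. Therefore $\sigma_i(\mA_\boxtimes) = \sigma_i(\mR_1)$ for all $i$, and Broadbent et al.'s bound \eqref{broadbent} with $i=n$ yields
\begin{equation*}
\sigma_n(\mA_\boxtimes) \;=\; \sigma_n(\mR_1) \;\geq\; \frac{\sigma_n(\mA)}{\sqrt{1 + \delta^2 n (m-n)}}.
\end{equation*}

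For the largest singular value, I would argue that passing to a row submatrix never increases any singular value. Concretely, writing $\mA_\boxtimes = \mathcal{G}_{\bm{\pi}_n}(\mA)$ as a selection of $n$ rows $\va_{i}^T$ of $\mA$, one has
\begin{equation*}
\mA_\boxtimes^T \mA_\boxtimes \;=\; \sum_{i \in \bm{\pi}_n} \va_i \va_i^T \;\preceq\; \sum_{i=1}^m \va_i \va_i^T \;=\; \mA^T \mA
\end{equation*}
in the Loewner (positive semidefinite) order, since the omitted summands are PSD. Taking the largest eigenvalue of both sides and then square roots gives $\sigma_1(\mA_\boxtimes) \leq \sigma_1(\mA)$.

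Combining the two bounds yields
\begin{equation*}
\kappa(\mA_\boxtimes) \;=\; \frac{\sigma_1(\mA_\boxtimes)}{\sigma_n(\mA_\boxtimes)} \;\leq\; \frac{\sigma_1(\mA)}{\sigma_n(\mA)/\sqrt{1 + \delta^2 n(m-n)}} \;=\; \kappa(\mA)\sqrt{1 + \delta^2 n(m-n)},
\end{equation*}
which is exactly \eqref{eq:conditioning-lemma-result}. I don't expect any real obstacle here: the lower bound is a direct quotation of \eqref{broadbent}, and the upper bound is a one-line PSD comparison. The only subtlety worth flagging is the identification $\sigma_i(\mA_\boxtimes) = \sigma_i(\mR_1)$, which relies on noticing that the pivoted columns of $\mA^T$ picked out by $\bm{\pi}_n$ are precisely the columns of $\mA_\boxtimes^T$, so $\mA_\boxtimes^T = \mQ \mR_1$ is essentially a thin QR of the selected submatrix.
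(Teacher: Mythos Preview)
Your proposal is correct and follows essentially the same approach as the paper: bound $\sigma_n(\mA_\boxtimes)$ from below via \eqref{broadbent} (using $\sigma_i(\mA_\boxtimes)=\sigma_i(\mR_1)$) and bound $\sigma_1(\mA_\boxtimes)$ from above by $\sigma_1(\mA)$, then take the ratio. The only cosmetic difference is that for the upper bound the paper invokes the interlacing inequalities for singular values of submatrices \cite{thompson_principal_1972}, whereas you give a direct Loewner-order argument $\mA_\boxtimes^T\mA_\boxtimes \preceq \mA^T\mA$; these are two phrasings of the same fact.
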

\begin{proof}
We first note that 
  \begin{subequations}\label{eq:conditioning-lemma}
  \begin{align}\label{eq:conditioning-lemma-a}
    \kappa\left(\boldsymbol{A} \right) \coloneqq \frac{\sigma_1\left(\boldsymbol{A}\right)}{\sigma_n\left(\boldsymbol{A}\right)}.
  \end{align}
  By \eqref{broadbent}, we have
  \begin{align}\label{eq:conditioning-lemma-b}
    \frac{1}{\sigma_n\left(\boldsymbol{A}_\boxtimes\right)} \leq \frac{\sqrt{ 1 + \delta^2 n (m-n) }}{\sigma_n\left(\boldsymbol{A}\right)}
  \end{align}
  Finally, since $\boldsymbol{A}_\boxtimes$ is precisely a submatrix of $\boldsymbol{A}$ (see \eqref{5}), then the singular values of $\boldsymbol{A}$ and $\boldsymbol{A}_\boxtimes$ interlace \cite{thompson_principal_1972}, in particular,
  \begin{align}\label{eq:conditioning-lemma-c}
    \sigma_1\left(\boldsymbol{A}_\boxtimes\right) \leq \sigma_1\left(\boldsymbol{A}\right).
  \end{align}
  Combining the three relations \eqref{eq:conditioning-lemma} with $\kappa\left(\boldsymbol{A}_\boxtimes\right) \coloneqq \sigma_1\left(\boldsymbol{A}_\boxtimes\right)\, / \, \sigma_n\left(\boldsymbol{A}_\boxtimes\right)$ proves \eqref{eq:conditioning-lemma-result}.
  \end{subequations}
\end{proof}
%%Thus, up to the square root factor in \eqref{eq:conditioning-lemma-result} (which scales as $\delta \sqrt{m n}$), then the stability of the rank-$n$ problem associated with $\boldsymbol{A}_\boxtimes$ is similar to that of the best rank-$n$ approximation to $\boldsymbol{A}$.

\subsubsection{Relation to subset selection}
Like QR with column pivoting, \emph{subset selection} is an alternative heuristic that aims to produce a well-conditioned submatrix with linearly independent columns. In practice, subset selection can produce a submatrix with smaller condition number than that provided by QR with column pivoting \cite{Golub_book}. The algorithm has two key steps that can be adapted to determine which rows of $\mA$ to subselect for $\mA_\boxtimes$. The first step involves computing the singular value decomposition of $\mA^T$. The next step requires QR column pivoting to be applied to a matrix formed by the transpose of the first $n$ right-singular vectors of $\boldsymbol{A}$,
\begin{equation}
\mV\left(:,1:n\right)^{T}\mP=\mQ\mR,
\end{equation}
where as before the columns of $\mP$ encode the permutations. Equations~\eqref{4} and~\eqref{5} can subsequently be used determine $\mA_{\boxtimes}$. One of main computational bottlenecks with \emph{subset selection} is the aggregated cost of performing both an SVD---costing $\mathcal{O}(m^2n^3)$ flops---with a QR column pivoting routine \cite{Golub_book}.

\subsection{Column pruning}
In~\eqref{2} we defined elements of the vector $\vb$ to be the weighted model evaluations at all $m$ quadrature points. In practice we only require the model evaluations at the $n$ quadrature points identified by $\bm{\pi}_n$. We define the \emph{effectively subsampled quadrature} points and weights to be
\begin{equation}
\bm{\zeta}_{e,j}=\zeta_{\bm{\pi}\left(j\right)}, \; \; \; \; \; \; \bm{\omega}_{e,j}=\omega_{\bm{\pi}\left(j\right)}
\label{pts_wts}
\end{equation}
respectively, for $j=1,\ldots, n$, where $\bm{\pi}$ is the QR permutation vector defined in~\eqref{5}. Thus the vector of weighted model evaluations at these points is given by 
\begin{equation}
\vb_{n}\left(j\right)=\bm{\omega}_{e,j}f\left(\bm{\zeta}_{e,j}\right), \; \; \; \; \; \; \; j = 1, \ldots, n.
\end{equation}
Assembling the \emph{square} linear system of equations yields
\begin{equation}
\underset{\vx_{n}}{\textrm{minimize}}\left\Vert \mA_\boxtimes \vx_{n}-\vb_{n}\right\Vert _{2}.
\label{lsq}
\end{equation}
The subscript $n$ in $\vx_n$ simply denotes the number of coefficient terms---equivalent to the cardinality of the polynomial basis defined by $\mathcal{J}$---that are to be solved for.  While \eqref{lsq} can be solved to yield accurate coefficient estimates for smooth functions, it is generally ill-advised, as we wish to \emph{approximate} rather than \emph{interpolate}. Consequently, we prune down the number of columns of $\mA_{\boxtimes}$ from $n$ to $l$. It is difficult to offer heuristics for column pruning as this will no doubt vary across applications, as it is dependent on which polynomial basis terms can be truncated without significant loss of accuracy in approximating $f$. Our experience (from the examples later in this paper) suggests that better results are obtaining by eliminating columns with the highest total degrees first, and our results using $n/l \in [1,1.5]$ show promise.

The matrices $\mA$ and $\mA_{\boxtimes}$ are formed from the polynomial basis defined by $\mathcal{J}$. The above procedure prunes elements from $\mathcal{J}$. We define the index set that results from this pruning as $\mathcal{I}$. I.e., $\mathcal{I}$ is defined by
\begin{equation}
  \left|\mathcal{I}\right|=l \; \; \; \; \mathcal{I}\subseteq\mathcal{J}, \; \; \; \; \boldsymbol{k}\in\mathcal{J}\setminus\mathcal{I}\implies\sum_{i=1}^{d}k_{d}\geq\sum_{i=1}^{d}j_{d}\; \; \; \textrm{for} \; \textrm{all} \; \; \bm{j} \in \mathcal{I}.
\end{equation}
This does not uniquely define $\mathcal{I}$ as there is usually not a unique element of $\mathcal{J}$ with highest total order. In this paper we perform the following methodology for pruning a single element from $\mathcal{J}$: We specify an (arbitrary) ordering of elements in $\mathcal{J}$, and based on this ordering prune the first $\boldsymbol{k} \in \mathcal{J}$ achieving the maximum total order.

Let $\mA_{\ddagger} \in \mathbb{R}^{n \times l}$ be the submatrix of $\mA_{\boxtimes}$ associated with the pruned set, $\mathcal{I}$, i.e., 
\begin{equation}
\mA_{\ddagger}\left(i, \bm{j}\right)=\bm{\omega}_{e,i}\bm{\psi}_{\bm{j}}\left(\bm{\zeta}_{e,i}\right), \; \; \; \; \bm{j}\in\mathcal{I}, \; \; i=1,\ldots,n.
\end{equation}

Regardless of how pruning is performed, the following result holds:%  We offer the following corollary on the condition number of $\mA_{\ddagger}$. 
\begin{corollary} 
With $\delta$ as in Lemma \ref{lemma_1}, let $\boldsymbol{A}_{\ddagger}$ be the column-pruned version of $\boldsymbol{A}_\boxtimes$. Then 
\begin{equation}\label{eq:corollary-result}
\kappa\left(\mA_{\ddagger}\right) \leq \kappa\left(\mA_{\boxtimes}\right) \leq \kappa\left(\boldsymbol{A}_n\right) \sqrt{ 1 + \delta^2 n (m-n) } 
\end{equation}
\end{corollary}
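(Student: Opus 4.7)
The corollary decomposes into two inequalities, and my plan is to handle them separately. The rightmost inequality $\kappa(\mA_{\boxtimes}) \leq \kappa(\mA)\sqrt{1+\delta^2 n(m-n)}$ is precisely the conclusion of Lemma \ref{lemma_1}, so I would simply invoke it (the notation $\mA_n$ in the statement appears to be a typo for $\mA$ in Lemma \ref{lemma_1}). All the work therefore lies in the leftmost inequality $\kappa(\mA_{\ddagger}) \leq \kappa(\mA_{\boxtimes})$, and this is really the point of the corollary: column pruning can only help, not hurt, the conditioning.

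For that first inequality, the key observation is structural: by construction $\mA_{\ddagger}\in\mathbb{R}^{n\times l}$ is obtained from the square nonsingular matrix $\mA_{\boxtimes}\in\mathbb{R}^{n\times n}$ by deleting $n-l$ columns (those corresponding to $\mathcal{J}\setminus\mathcal{I}$). I would then appeal to the interlacing theorem for singular values of submatrices (the same \cite{thompson_principal_1972} reference already used in Lemma \ref{lemma_1}): if $B$ is obtained from $A$ by striking out columns, then $\sigma_i(B)\leq\sigma_i(A)$ for each index $i\leq l$, and $\sigma_i(B)\geq\sigma_{i+n-l}(A)$. Applied to our case, this gives simultaneously
\begin{equation*}
\sigma_1(\mA_{\ddagger}) \leq \sigma_1(\mA_{\boxtimes}) \qquad\text{and}\qquad \sigma_l(\mA_{\ddagger}) \geq \sigma_n(\mA_{\boxtimes}).
\end{equation*}

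Taking the ratio of the numerator-upper-bound and denominator-lower-bound yields
\begin{equation*}
\kappa(\mA_{\ddagger}) = \frac{\sigma_1(\mA_{\ddagger})}{\sigma_l(\mA_{\ddagger})} \leq \frac{\sigma_1(\mA_{\boxtimes})}{\sigma_n(\mA_{\boxtimes})} = \kappa(\mA_{\boxtimes}),
\end{equation*}
and chaining this with Lemma \ref{lemma_1} produces \eqref{eq:corollary-result}. I do not anticipate a real obstacle here, as the argument reduces entirely to a single application of singular value interlacing together with the earlier lemma; the only subtlety to confirm is that the denominator uses $\sigma_l$ of the rectangular matrix $\mA_{\ddagger}$ (its smallest singular value, since it has $l$ columns) rather than $\sigma_n$, which is why the interlacing direction that bounds $\sigma_l(\mA_{\ddagger})$ from below by $\sigma_n(\mA_{\boxtimes})$ is exactly what is needed. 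It is also worth noting that this step is independent of how the pruning is actually performed, matching the remark in the text that precedes the corollary.
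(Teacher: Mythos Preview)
Your proposal is correct and follows essentially the same approach as the paper's own proof: both invoke the singular value interlacing theorem for submatrices to obtain $\sigma_1(\mA_{\ddagger}) \leq \sigma_1(\mA_{\boxtimes})$ and $\sigma_l(\mA_{\ddagger}) \geq \sigma_n(\mA_{\boxtimes})$, form the ratio, and then apply Lemma~\ref{lemma_1} for the second inequality. Your observation that $\mA_n$ is a typo for $\mA$ and your remark that the argument is independent of the particular pruning rule are both consistent with the paper.
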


\begin{proof}
From the interlacing property of singular values
%(see 5.3 in Lawson and Hanson \cite{Lawson_Hanson}), 
we have
\begin{equation}
\sigma_1(\mA_{\boxtimes}) \geq \sigma_1(\mA_{\ddagger}) \; \; \textrm{and} \; \;  \sigma_l(\mA_{\ddagger}) \geq \sigma_n(\mA_{\boxtimes}),
\end{equation}
where the singular values are ordered such that $\sigma_{1}\left(\cdot\right) \geq \sigma_{2}\left(\cdot\right) \geq \ldots \geq \sigma_{n}\left(\cdot\right)$. Then %by the definition of the condition number
\begin{equation}
%\frac{\sigma_{1}\left(\mA_{\ddagger}\right)\leq\sigma_{1}\left(\mA_{\boxtimes}\right)}{\sigma_{l}\left(\mA_{\ddagger}\right)\geq\sigma_{n}\left(\mA_{\boxtimes}\right)}
\frac{\sigma_{1}\left(\mA_{\ddagger}\right)}{\sigma_{l}\left(\mA_{\ddagger}\right)} \leq \frac{\sigma_{1}\left(\mA_{\boxtimes}\right)}{\sigma_{n}\left(\mA_{\boxtimes}\right)}
\implies\kappa\left(\mA_{\ddagger}\right)\leq\kappa\left(\mA_{\boxtimes}\right).
\end{equation}
The second inequality in \eqref{eq:corollary-result} is an application of Lemma \ref{lemma_1}.
\end{proof}
This implies that reducing the number of columns---in particular for our case of eliminating the basis terms with the highest total degrees---will not cause an increase in the condition number of $\mA_{\ddagger}$ compared to $\mA_{\boxtimes}$. Thus compared to solving a least-squares problem with the best rank-$n$ approximation to $\boldsymbol{A}$, we suffer a penalty of the order $\delta \sqrt{m n}$ (when $m \gg n$).

%%\anadd{Pranay To do: See reveiwer's comment: In a model with $d$ parameters there are $d$ first-order terms and ...}

\subsection{Solving the least squares problem}
We now reach the main objective of this paper, which is to solve the least squares problem given by
\begin{equation}
\underset{\vx_{l}}{\textrm{minimize}}\left\Vert \mA_{\ddagger} \vx_{l}-\vb_{n}\right\Vert _{2},
\label{lsq_main}
\end{equation}
for the coefficients $\vx_{l} \in \mathbb{R}^{l}$. There are two sailent points we wish to emphasize upon when solving~\eqref{lsq_main}. The first pertains to preconditioning. We impose a unit length diagonal column scaling, a preconditioner that is frequently selected when solving least squares problems \cite{Hansen}. We define the preconditioner (a nonsingular matrix) $\mS \in \mathbb{R}^{n \times n}$ as
\begin{equation}
\mS=\left(\begin{array}{ccc}
\left\Vert \va_{\ddagger1}\right\Vert _{2}\\
 & \ddots\\
 &  & \left\Vert \va_{\ddagger n}\right\Vert _{2}
\end{array}\right) 
\end{equation}
where vectors $\left\{ \va_{\ddagger 1}, \ldots , \va_{\ddagger n} \right\}$ are the columns of $\mA_{\ddagger}$. This yields the modified least squares problem
\begin{equation}
\underset{\vz_{l}}{\textrm{minimize}}\left\Vert \mA_{\ddagger} \mS^{-1}\vz_{l}-\vb_{n}\right\Vert _{2} \; \; \; \textrm{with} \; \; \; \mS \vx_{l} = \vz_{l}.
\label{lsq_main_precond}
\end{equation}
This brings us to a well-known observation about solving least-squares problems: Solving the normal equations, e.g., computing the inverse of $\mS^{-T}\mA_{\ddagger}^{T}\mA_{\ddagger}\mS^{-1}$ or even forming an associated Cholesky factorization, is relatively unstable. Our results reported here solve the system \eqref{lsq_main_precond} via the more stable $QR$ factorization approach (see Chapter 4 of \cite{Hansen}).

\subsection{A step-by-step algorithm}
\label{sec:approach}
We summarize this section with an algorithm incorporating the previously defined computational techniques given a function $f=f\left(\bm{\zeta}\right)$ defined on $\mathcal{R} = \mathrm{supp}\left(\rho\right)$ with joint probability density function $\bm{\rho}\left(\bm{\zeta}\right)$. 
\begin{enumerate}
\item \textbf{Polynomial basis selection:} Let a joint density $\bm{\rho(\zeta)}$ be given. Select a hyperbolic or total order index set $\mathcal{J}$, with a cardinality $\left|\mathcal{J}\right|=n$.
\item \textbf{Initial grid selection:} Choose a tensorized quadrature rule consisting of $m$ points and weights $\left\{ \zeta_{j},\omega_{j}\right\} _{j=1}^{m}$ such that the quadrature rule satisfies~\eqref{eq:quadrature-accuracy}. This defines the matrix $\mA$. 
\item \textbf{Subselecting points from initial grid:} Compute the QR with pivoting factorization $\mA^{T} \mP = \mQ \mR$ and select the first $n$ entries encoded in $\mP$ to compute the matrix $\mA_{\boxtimes}$. Evaluate the model at the quadrature points corresponding to these $n$ pivots to compute $\vb_n$.
\item \textbf{Column pruning:} For noisy $f$, prune down the number of columns by eliminating the columns of $\mA_{\boxtimes}$ that correspond to the highest total orders. Store the remaining columns in a new matrix, $\mA_{\ddagger}$. As a heuristic, we recommend pruning down by ratios of 1.25 and 1.50.
\item \textbf{Least squares:} Solve the least squares problem with $\mA_{\ddagger}$ and $\vb_n$.
\end{enumerate}

\section{Discussion \& heuristics}
\label{sec:discussion}
The stategy described is similar to the procedure of constructing \emph{approximate Fekete points \cite{AFP}}; this latter approach has been used in the context of finding near-optimal interpolation points in multidimensional space. An alternative way to think about the strategy above is that for a given design matrix $\mA$, and a maximum number of permitted model evaluations $n$, we are extracting a set of at most $n$ sample points. Our algorithm in this context offers a deterministic recipe for subsampling a tensor grid. The notable difference between our algorithm and that produced by approximate Fekete points is that we introduce (square-root) quadrature weights $\omega_i$ in the definition of $\mA$. The algorithm as we have decribed it can produce accurate low-rank approximations to the matrix $\mA$ \cite{harbrecht_2012}. 

We note that we have described this quadrature subselection strategy as an attempt to construct a well-conditioned design matrix. Remarkably, if $\omega_i \equiv 1$, then the limiting behavior of the points selected via this algorithm is known. Consider the univariate case on a bounded set but with large $m$ and $n$. In this case it is known that as $m$ and $n$ tend to infinity appropriately, the QR selection strategy chooses points that distribute according to the Chebyshev (arcsine) measure \cite{bos_2011}. In addition, this property holds in the multidimensional setting on a hypercube (the set formed from the Cartesian product of univariate bounded intervals). If we use the QR strategy to select points from a sufficiently dense grid on a hypercube, these points distribute according to the product Chebyshev measure on the hypercube. Precise conditions on $m$, $n$, the type of grid, and the type of convergence is given in \cite{bos_2011}. 

\subsection{Memory requirements}
One of the disadvantages of QR with column pivoting is that to pivot the column with the largest norm, we must compute the norms of all the remaining columns. In the next stage of the for-loop, we then need to downdate these norms. This is where Identity~\eqref{equ_rule} is useful. Occasionally, a cancellation occurs in~\eqref{equ_rule} requiring knowledge of all the column entries to recompute its norm. Thus, it is not possible to carry out QR with column pivoting by storing 2-3 rows or columns at a time; access to the full matrix is required. This implies that the cost of our technique scales exponentially with the dimension of $f$'s inputs---owing to QR column pivoting. This is one drawback of the current approach. This scaling is however independent from the number of evaluations of $f$ that we require. 

While writing the entire matrix $\mA$ on the disk and then extracting rows and columns as required is one option, it is far from elegant. One possible path forward lies in randomized QR column pivoting techniques \cite{RandomQR}. These techniques are promising because they restrict the size of $\mA$; however, a detailed investigation of this procedure is outisde the scope of this manuscript.

\section{An Analytical Example}
\label{sec:examples}
In this simple analytical example we set 
\begin{equation}
f(\bm{\zeta}) = exp(\zeta^{(1)} + \zeta^{(2)})
\end{equation}
defined over $\mathcal{R}=[-1,1]^2$ with $\bm{\rho(\zeta)}$ the uniform density. We wish to approximate $f$ using a basis of Legendre orthonormal polynomials with \emph{effectively subsampled quadratures}. We subsample an isotropic tensor grid formed from a 21-point Gauss-Legendre stencil in each dimension. We let $\mathcal{J}$ be a total order basis for our least squares computations. In this example, we compare our method with the randomized sampling approach of \cite{Zhou} and investigate the sensitivity of the procedure to column pruning. 

Figure~\ref{problem_1b}(a) plots the approximation errors in the polynomial coefficients using both the randomized and effectively subsampled methods. Here values on the $x-$axis represent the maximum degree defining the total-order index set $\mathcal{J}$. The $y-$axis defines the coefficient error on a base-10 logarithmic scale. For each $x-$axis value, this error is computed using
\begin{equation}
\epsilon=\left\Vert \vx_{\otimes,\mathcal{J}}- \vx_{n}\right\Vert _{2}
\label{error1}
\end{equation}
where $\vx_{\otimes}$ are the coefficients estimated from a 21-point tensor grid quadrature rule. The subscript $\mathcal{J}$ in $\vx_{\otimes,\mathcal{J}}$ denotes the coefficient values only associated with the multi-indices in the total order index set $\mathcal{J}$. As mentioned earlier, the coefficients $\vx_n$ (see Figure~\ref{figureimp}) are obtained by solving the least squares problem with the polynomial basis $\mathcal{J}$. The coefficient errors in Figure~\ref{problem_1b}(a) correspond to the errors associated with solving the least squares problem on the square matrix $\mA_{\boxtimes}$. The green line in the figure shows the coefficient error resulting from using \emph{effectively subsampled quadratures}, while the yellow shaded regions denote the minimum and maximum values of $\epsilon$ obtained using randomized subsampling---with 20 repetitions of the experiment. The red line represents the mean result from those 20 trials. 

In Figures~\ref{problem_1b}(b-d), for a fixed maximum degree $k$, we prune down the number of columns in $\mA_{\boxtimes}$ from $n=\left| \mathcal{J} \right|  $ to $l$ to yield a smaller index set $\mathcal{I}$ and matrix $\mA_{\ddagger}$. This \emph{column pruning} starts with those columns that have the highest total orders. The errors we report here are given by 
\begin{equation}
\epsilon=\left\Vert \vx_{\otimes,\mathcal{I}}- \vx_{l}\right\Vert _{2}
\label{error2}
\end{equation}
where $\vx_{\otimes,\mathcal{I}}$ are only the coefficients $\vx_{\otimes}$ that have multi-indices in $\mathcal{I}$, and where $\vx_{l}$ are the coefficients obtained via least squares on the matrix $\mA_{\ddagger}$. In Figures~\ref{problem_1b}(b-d) we plot $\epsilon$ values for varying total orders with $n/l$ ratios of 1.15, 1.25 and 1.5. Condition numbers associated with the matrices $\mA_{\boxtimes}$ and $\mA_{\ddagger}$ in Figure~\ref{problem_1b} are shown in Figure~\ref{problem_1b_cond}. 
\begin{figure}
\begin{subfigmatrix}{2}% number of columns
\subfigure[]{\includegraphics{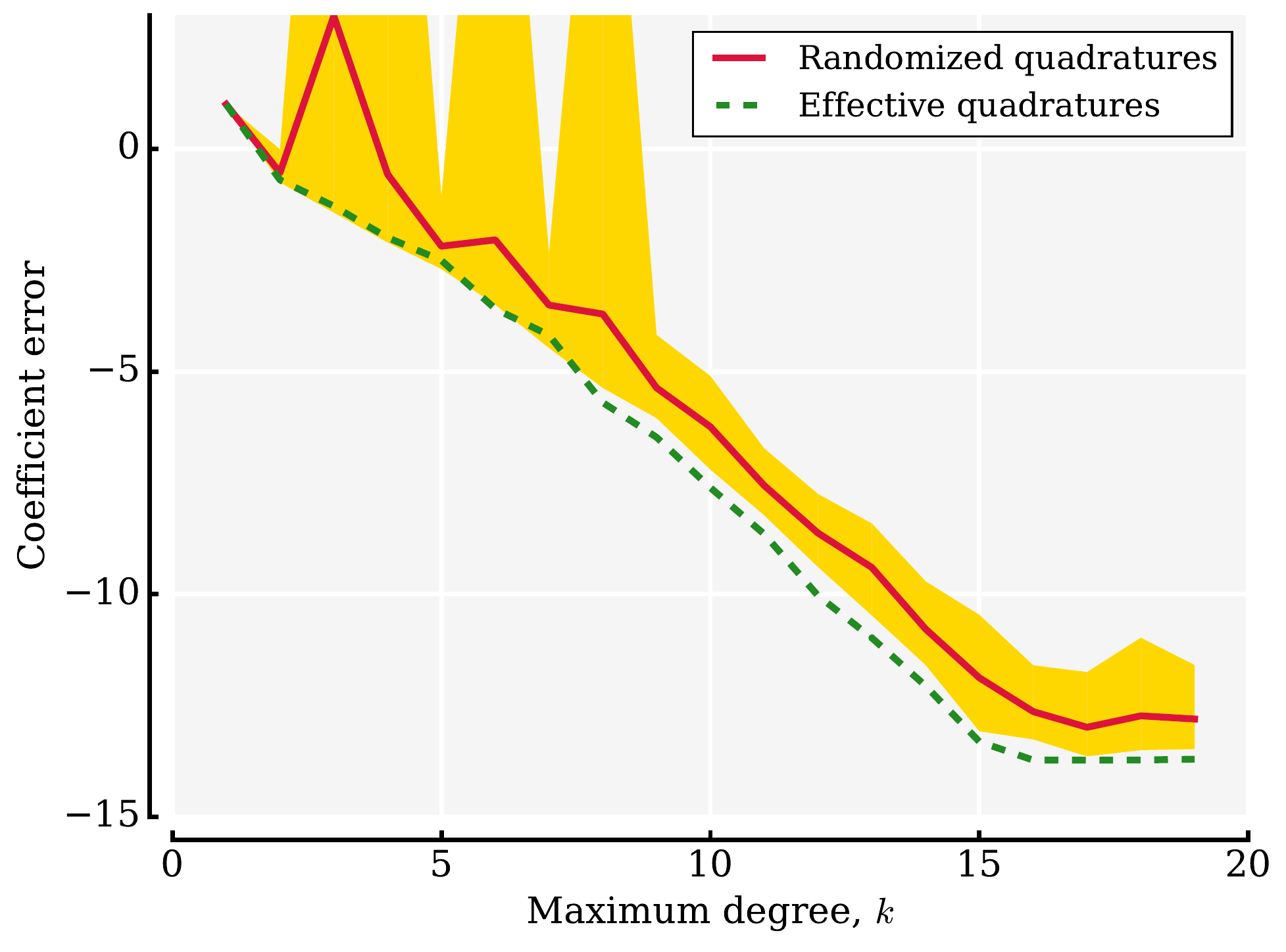}}
\subfigure[]{\includegraphics{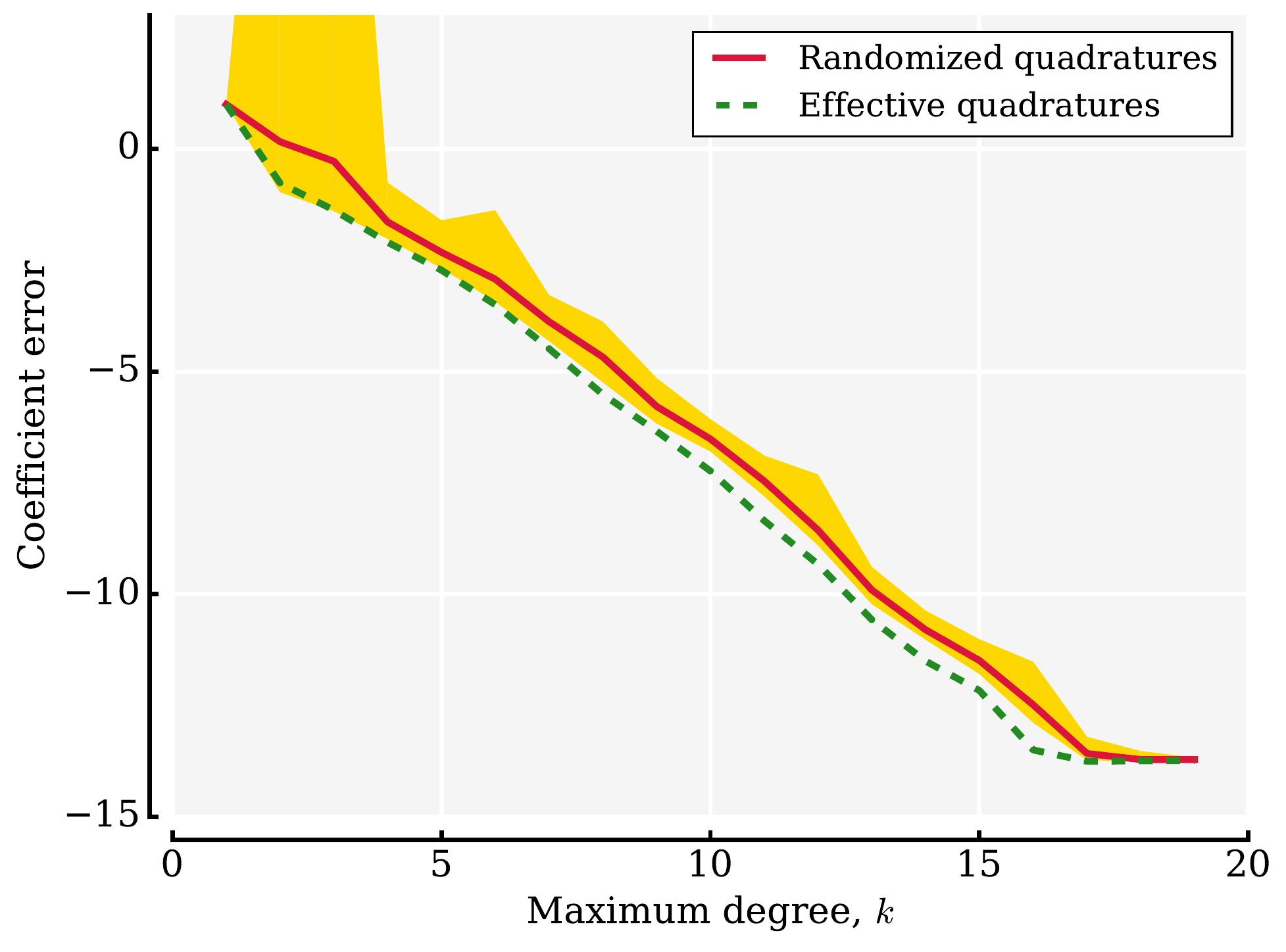}}
\subfigure[]{\includegraphics{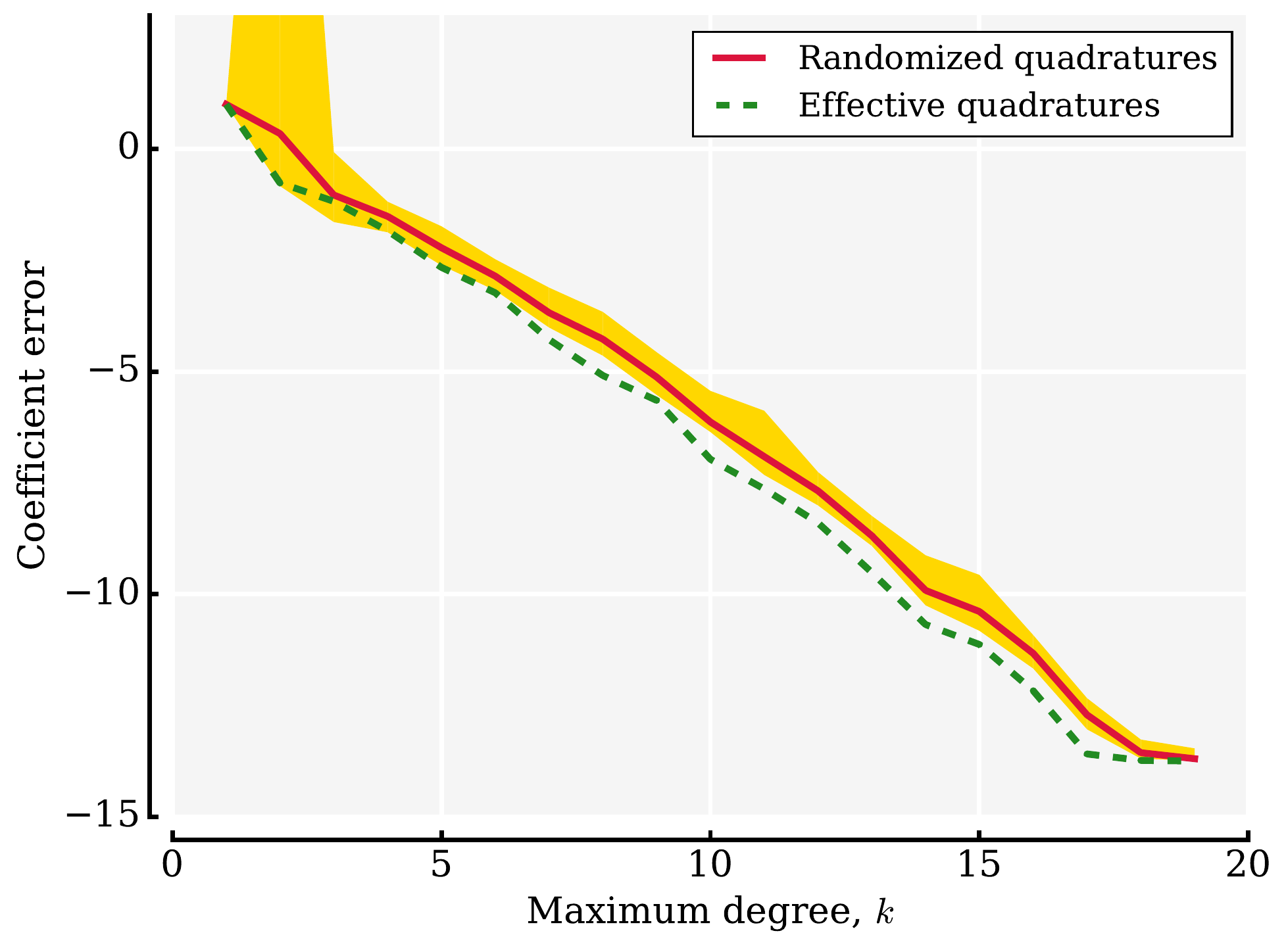}}
\subfigure[]{\includegraphics{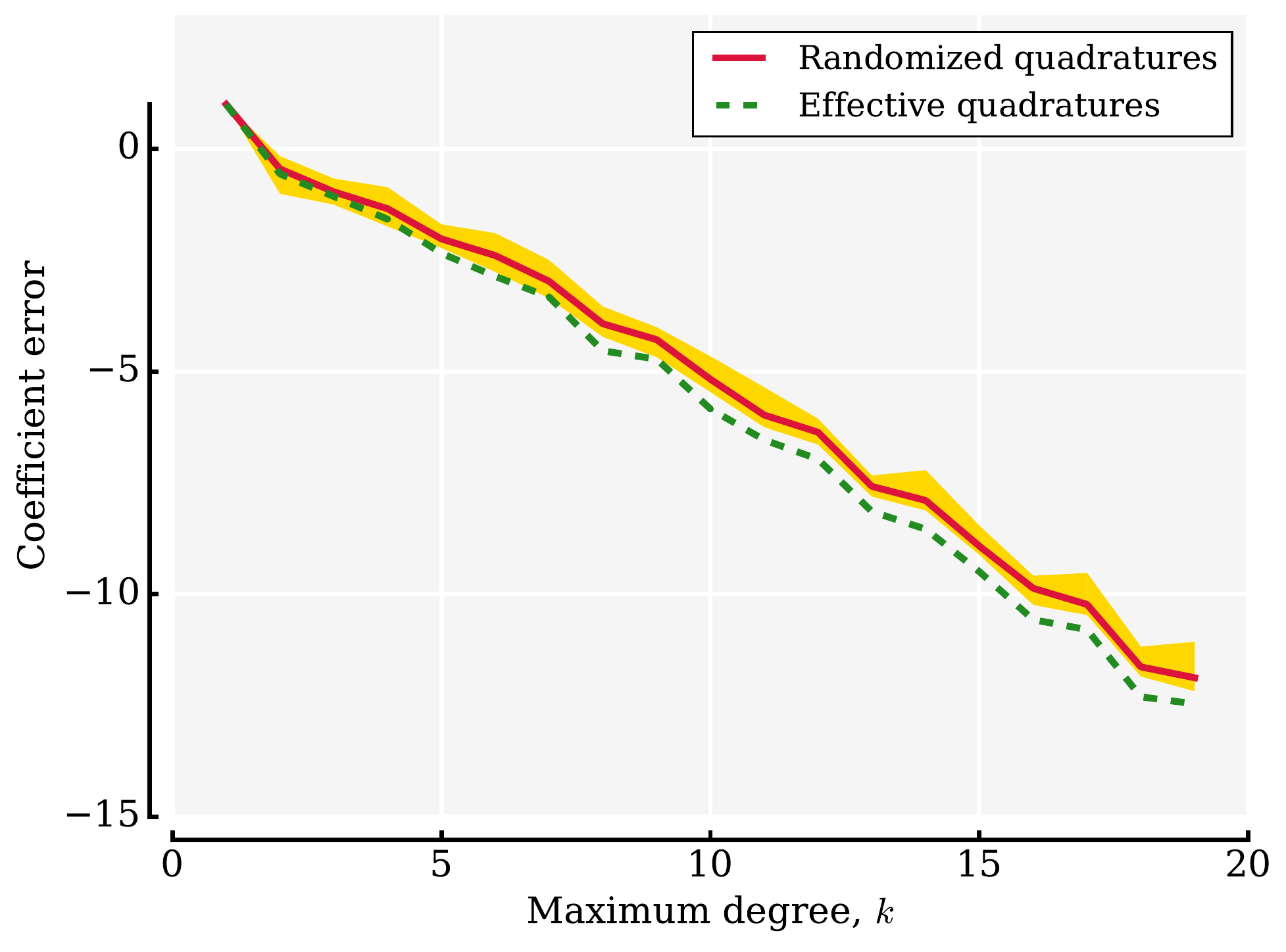}}
\end{subfigmatrix}
\caption{Approximation errors in polynomial coefficients---computed using~\eqref{error1} and~\eqref{error2}---plotted on a base-10 logarithmic scale for the bivariate function, $f(\bm{\zeta}) = exp(\zeta^{(1)} + \zeta^{(2)})$ defined on the $[-1,1]$ hypercube, using Legendre orthonormal polynomials. Polynomial approximations are constructed via least squares using randomized quadratures--- repeated 20 times---and effectively subsampled quadratures. Results are plotted for $n/l$ ratios of (a) 1.0; (b) 1.15; (c) 1.25; (d) 1.5.}
\label{problem_1b}
\end{figure}
\begin{figure}
\begin{subfigmatrix}{2}% number of columns
\subfigure[]{\includegraphics{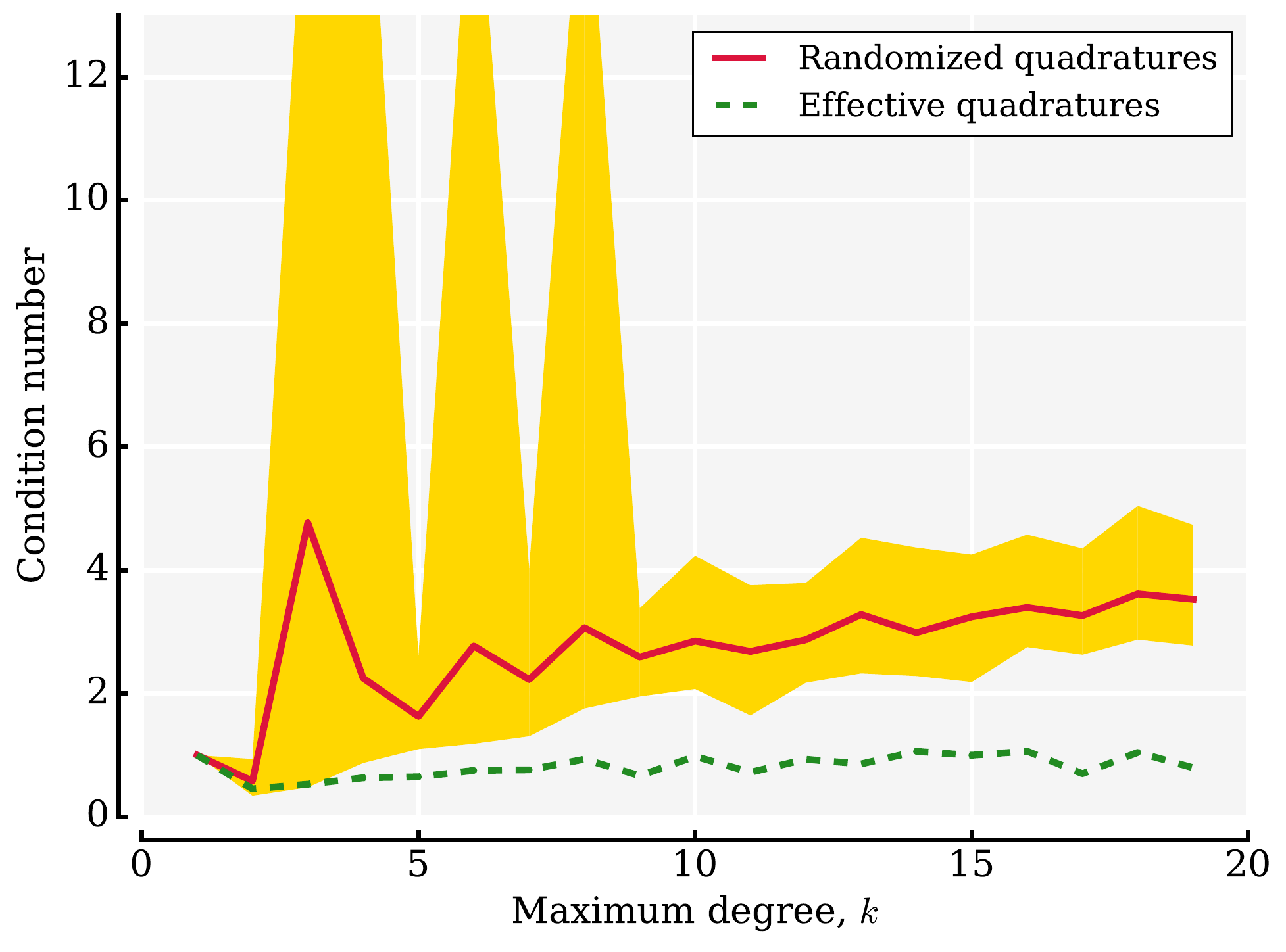}}
\subfigure[]{\includegraphics{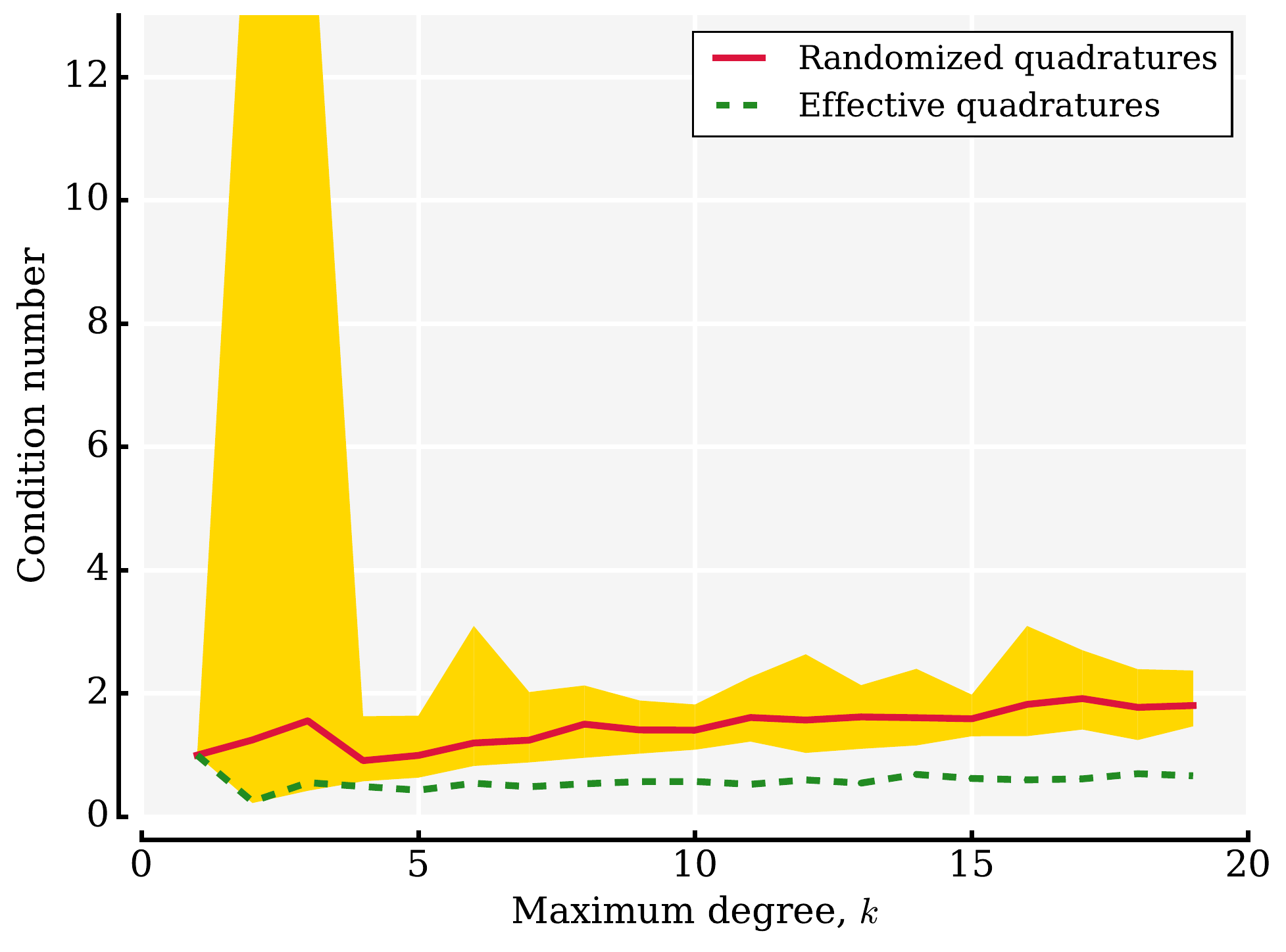}}
\subfigure[]{\includegraphics{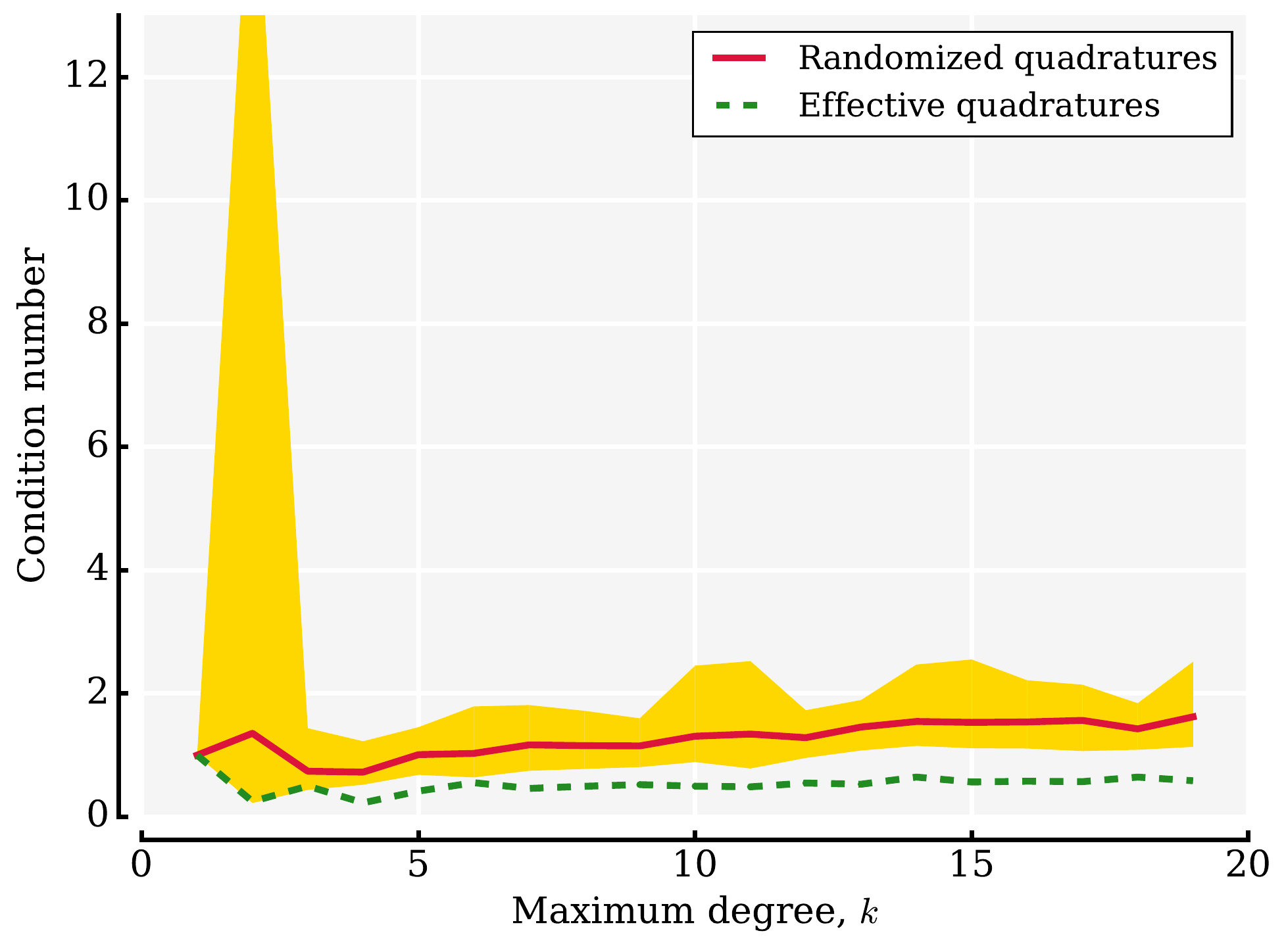}}
\subfigure[]{\includegraphics{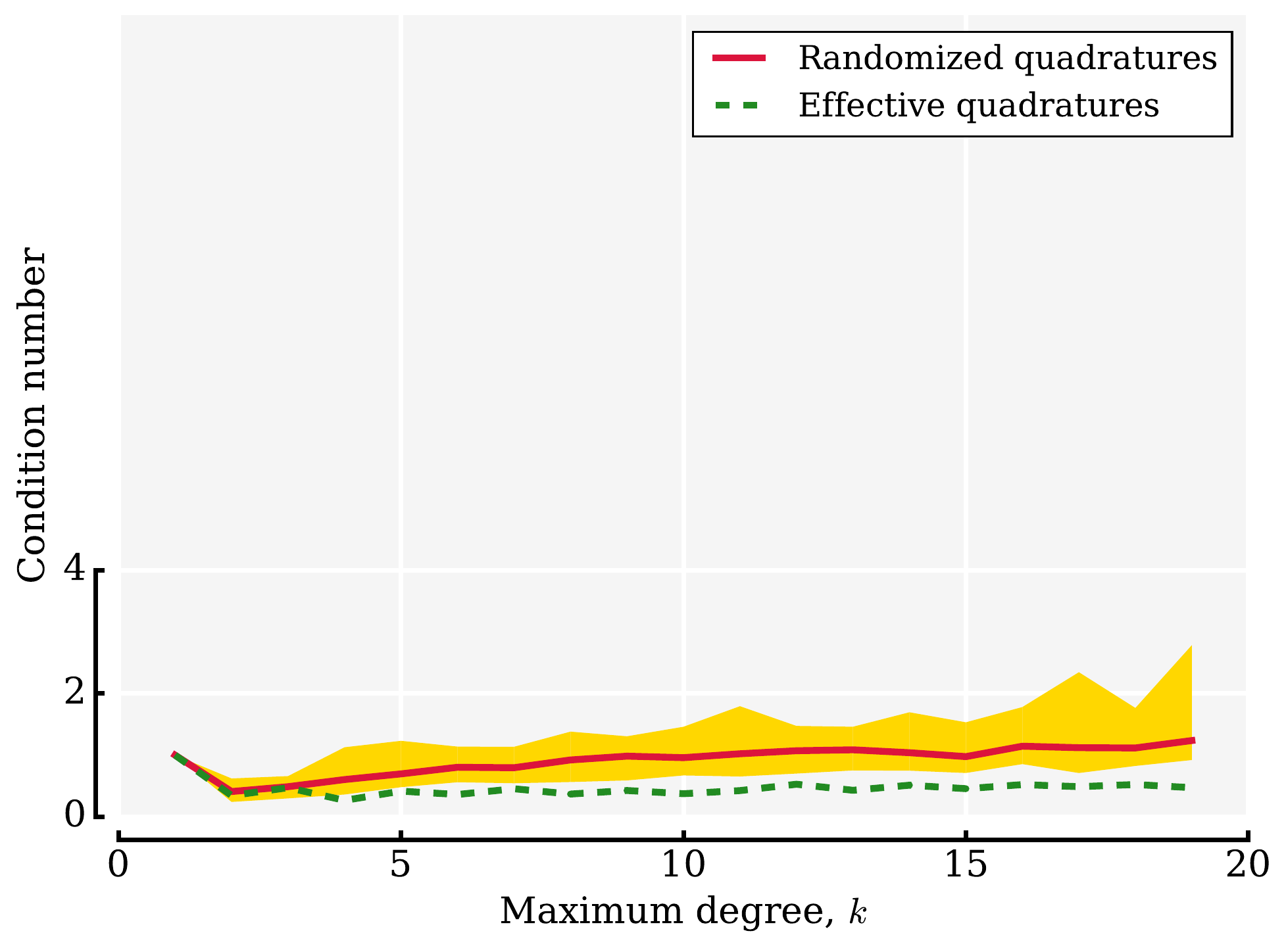}}
\end{subfigmatrix}
\caption{Condition numbers of $\mA_{\boxtimes}$ and $\mA_{\ddagger}$, plotted on a base-10 logarithmic scale for the bivariate function, $f(\bm{\zeta}) = exp(\zeta^{(1)} + \zeta^{(2)})$ defined on the $[-1,1]$ hypercube, using Legendre orthonormal polynomials. Polynomial approximations are constructed via least squares using randomized quadratures---repeated 20 times---and effectively subsampled quadratures. Results are plotted for $n/l$ ratios of (a) 1.0; (b) 1.15; (c) 1.25; (d) 1.5.}
\label{problem_1b_cond}
\end{figure}
%\begin{figure}
%\begin{subfigmatrix}{2}% number of columns
%\subfigure[]{\includegraphics{Figures/Figure6_a.eps}}
%\subfigure[]{\includegraphics{Figures/Figure6_b.eps}}
%\subfigure[]{\includegraphics{Figures/Figure6_c.eps}}
%\subfigure[]{\includegraphics{Figures/Figure6_d.eps}}
%\end{subfigmatrix}
%\caption{Error in variance estimation using $\mA_{\boxtimes}$ and $\mA_{\ddagger}$, plotted on a base-10 logarithmic scale for the bivariate function, $f(\bm{\zeta}) = exp(\zeta^{(1)} + \zeta^{(2)})$ defined on the $[-1,1]$ hypercube, using Legendre orthonormal polynomials. Polynomial approximations are constructed via least squares using randomized quadratures---repeated 20 times---and effectively subsampled quadratures. Results are plotted for $n/l$ ratios of (a) 1.0; (b) 1.15; (c) 1.25; (d) 1.5.}
%\label{problem_1b_variance}
%\end{figure}

First let us consider the results when $n/l$=1.0 as in Figure~\ref{problem_1b}(a) and Figure~\ref{problem_1b_cond}(a). Random draws of rows for low maximum degrees---from $k=2$ to $k=8$---leads to, on average, matrices that are nearly singular. This results in coefficient errors that extremely high as illustrated in Figure~\ref{problem_1b}(a). By pruning down the number of columns, this effect can be reduced as shown in Figures~\ref{problem_1b}(b-d). This also results in better coefficient error estimates, however this comes at the cost of requiring more model evaluations for a given number of coefficients to be estimated. In contrast, our effectively subsampled procedure offers reduced error estimates even at a $n/l$ ratio of 1.0; condition numbers for all $k$ never exceed 10.

\section{Piston model problem}
\label{sec:numex}
In this example, we apply our method on a non-linear model of the cycle time of a piston given in \cite{Kenett}. The piston cycle time $C$ is expressed as
\begin{equation}
C=2\pi\sqrt{\frac{M}{k+S^{2}\frac{P_{0}V_{0}T_{a}}{T_{0}V^{2}}}},
\end{equation}
with 
\begin{equation}
V=\frac{S}{2k}\left(\sqrt{A^{2}+4k\frac{P_{0}V_{0}}{T_{0}}T_{a}}-A\right) \; \; \textrm{and} \; \; A=P_{0}S+19.62M-\frac{kV_{0}}{S}
\end{equation}
which depends on the seven inputs given in Table~\ref{piston_params}. These inputs are uniformly distributed over their respective ranges. Our task here is to contrast \emph{effective quadrature subsampling} with \emph{randomized quadrature subsampling} on a total order basis. 
\begin{table}
 \begin{center}
   \caption{Input parameters and ranges for the piston problem}
	\vspace{0.5 mm}   
   \label{piston_params}
   \begin{tabular}{lll}
%%   \toprule
    \toprule
    Input parameters & Range & Description \\
    \hline
        \hline
$M$ & $[30,60]$ & Piston weight ($kg$)\\  
$S$ & $[0.005, 0.0020]$ & Piston surface area ($m^2$)\\  
$V_0$ & $[0.002, 0.010]$ & Initial gas volume ($m^3$) \\
$k$ & $[1000, 5000]$ & Spring coefficient ($N/m$) \\
$P_0$ & $[90000, 110000]$ & Atmospheric pressure ($N/m^2$) \\
$T_a$ & $[290, 296]$ & Ambient temperature ($K$) \\
$T_0$ & $[340, 360]$ & Filling gas temperature ($K$) \\ \hline
%%   \toprule
   \end{tabular}
 \end{center}
\end{table}

To begin, we compute the full tensor grid solution using 5 points in each direction, yielding a total of $5^7=78,125$ function evaluations. We then use these coefficients for computing the errors from both effectively subsampled and randomly subsampled approaches---i.e., $\epsilon$ computed as per~\eqref{error1} and~\eqref{error2}. We also use these coefficients to compute the \emph{full tensor grid} Sobol' indices shown Figure~\ref{Sobol}. 

We run numerical experiments using total order basis with maximum degrees of $k=2,3$ and $4$. Table~\ref{params_points} shows the number of basis terms at each of these k values for three different $n/l$ ratios. Both randomly subsampled and effectively subsampled condition numbers and errors for these numerical experiments are reported in Tables~\ref{results_1}-\ref{results_3}. The randomized experiments were repeated 20 times and their minimum, maximum and mean values are reported.

\begin{table}[H]
 \begin{center}
   \caption{Number of tensor grid points and cardinality of total order basis used for the piston problem numerical experiments}
	\vspace{0.5 mm}   
   \label{params_points}
   \begin{tabular}{l l p{25mm} p{25mm} p{25mm} }
    \toprule
    $k$ & Points & Cardinality \newline $n/l=1.0$ & Cardinality \newline $n/l=1.15$ & Cardinality \newline $n/l=1.25$\\
    \hline
        \hline
2 & $3^7=2,187$ & 36 & 31 & 29 \\  
3 & $4^7=16,384$ & 120 & 104 & 96\\  
4 & $5^7=78,125$ & 330 & 287 & 264\\   \hline
   \end{tabular}
 \end{center}
\end{table}

\begin{table}[H]
 \begin{center}
      \caption{Comparison of error $\epsilon$ in the coefficients and the condition number $\kappa$ of $\mA_{\boxtimes}$ for randomized and effective quadratures for $n/l$=1.00. Reported randomized results are the outcome of 20 repetitions. Here $k$ indicates the maximum degree of the 7D total order polynomial.}
 \vspace{0.5mm}   
    \label{results_1}
  \begin{tabular}{l | cccccc | cc}
    \toprule
    \multirow{2}{*}{$k$}   &
      \multicolumn{6}{c}{Randomized}  \vline & 
      \multicolumn{2}{c}{Effective} \\
      & {min$(\epsilon)$} & {max$(\epsilon)$} & {$\mu(\epsilon)$} & {min$(\kappa)$} & {max$(\kappa)$} & {$\mu(\kappa)$} & {$\epsilon$} & {$\kappa$}\\
      \hline
      \midrule
    2 & 0.051 & 3.061 & 0.448 & 48.244 & 1559.1 & 301.56 & 0.0252 & 5.185  \\
    3 & 0.066 & 3.083 & 0.671 & 115.35 & 12422.1& 2118.2 & 0.01463 & 15.098 \\
    4 & 0.066 & 0.901 & 0.199 & 595.4 &  32782.6 &  2989.0 & 0.0597 & 791.98 \\
    \bottomrule
  \end{tabular} 
      \end{center}
\end{table}

\begin{table}[H]
 \begin{center}
      \caption{Comparison of error $\epsilon$ in the coefficients and the condition number $\kappa$ of $\mA_{\ddagger}$ for randomized and effective quadratures for $n/l$=1.15. Reported randomized results are the outcome of 20 repetitions. Here $k$ indicates the maximum degree of the 7D total order polynomial.}
 \vspace{0.5mm}   
     \label{results_2}
  \begin{tabular}{l | cccccc | cc}
    \toprule
    \multirow{2}{*}{$k$}   &
      \multicolumn{6}{c}{Randomized}  \vline & 
      \multicolumn{2}{c}{Effective} \\
      & {min$(\epsilon)$} & {max$(\epsilon)$} & {$\mu(\epsilon)$} & {min$(\kappa)$} & {max$(\kappa)$} & {$\mu(\kappa)$} & {$\epsilon$} & {$\kappa$}\\
      \hline
      \midrule
    2 & 0.0367 & 0.252 & 0.0779 & 15.02 & 60.83 & 23.41 & 0.0375 & 3.912 \\
    3 & 0.0284 & 0.051& 0.0403 & 22.92 & 36.98 & 29.60 & 0.0196 & 6.821\\
    4 & 0.0151 & 0.023 & 0.0194 & 30.275 & 40.97 & 35.317 & 0.0159 & 33.652\\
    \bottomrule
  \end{tabular} 
      \end{center}
\end{table}

\begin{table}[H]
 \begin{center}
      \caption{Comparison of error $\epsilon$ in the coefficients and the condition number $\kappa$ of $\mA_{\ddagger}$ for randomized and effective quadratures for $n/l$=1.25. Reported randomized results are the outcome of 20 repetitions. Here $k$ indicates the maximum degree of the 7D total order polynomial.}
 \vspace{0.5mm}   
     \label{results_3}
  \begin{tabular}{l | cccccc | cc}
    \toprule
    \multirow{2}{*}{$k$}   &
      \multicolumn{6}{c}{Randomized}  \vline & 
      \multicolumn{2}{c}{Effective} \\
      & {min$(\epsilon)$} & {max$(\epsilon)$} & {$\mu(\epsilon)$} & {min$(\kappa)$} & {max$(\kappa)$} & {$\mu(\kappa)$} & {$\epsilon$} & {$\kappa$}\\
      \hline
      \midrule
    2 & 0.0440 & 0.129  & 0.0719 & 9.809 & 27.043 & 16.008 & 0.0362 & 4.9126 \\
    3 & 0.0244 & 0.0469 & 0.0329 & 14.98 & 23.61 & 19.63 & 0.0186 & 5.7832\\
    4 & 0.0145 & 0.0217 & 0.0168 & 18.72 &25.164 & 21.95 & 0.0163 & 20.507\\
    \bottomrule
  \end{tabular} 
      \end{center}
\end{table}

For all $k$ across all $n/l$ ratios, the effectively subsampled approach yields lower coefficient errors and lower condition numbers compared to the averaged randomized quadrature results. In particular, when $n/l=1.0$ the $\epsilon$ values from effectively subsampled quadratures are 1-3 orders of magnitude below those of the corresponding randomized values. Randomized subsampling in general yields lower coefficient errors at $n/l$ ratios greater than one. For a fixed $k$, as the $n/l$ ratio is increased, both condition numbers and errors are found to drop for both approaches.

For completeness, we compare the Sobol' indices of both the random and effectively subsampled quadratures approaches in Figure~\ref{Sobol}. These indices are computed using the coefficient estimates obtained from the randomized, effectively subsampled and full tensor grid quadrature rule---using the approach in Sudret~\cite{Sudret}, which is detailed in the Appendix of this paper. 
\begin{figure}
\begin{subfigmatrix}{2}% number of columns
\subfigure[]{\includegraphics{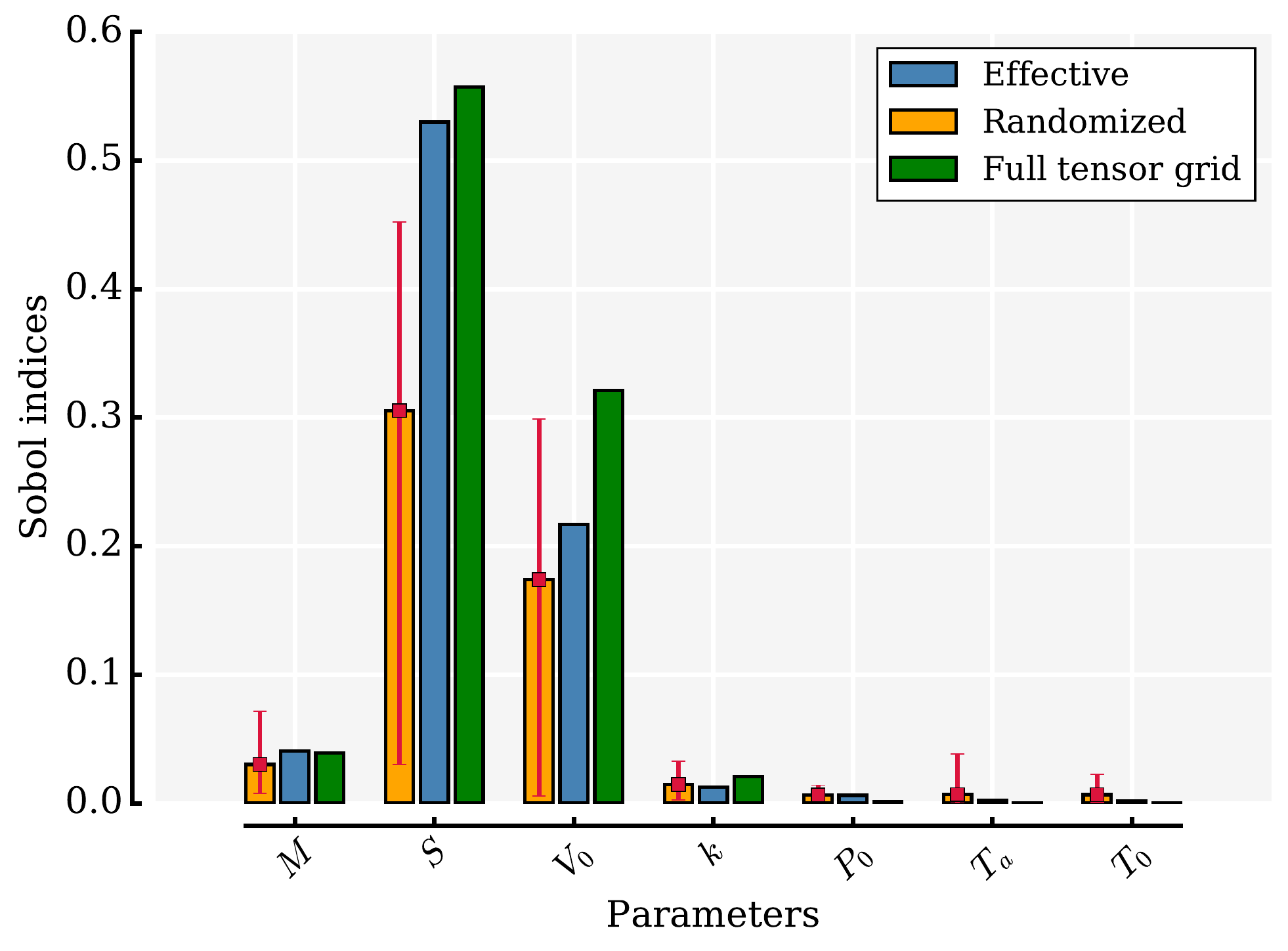}}
\subfigure[]{\includegraphics{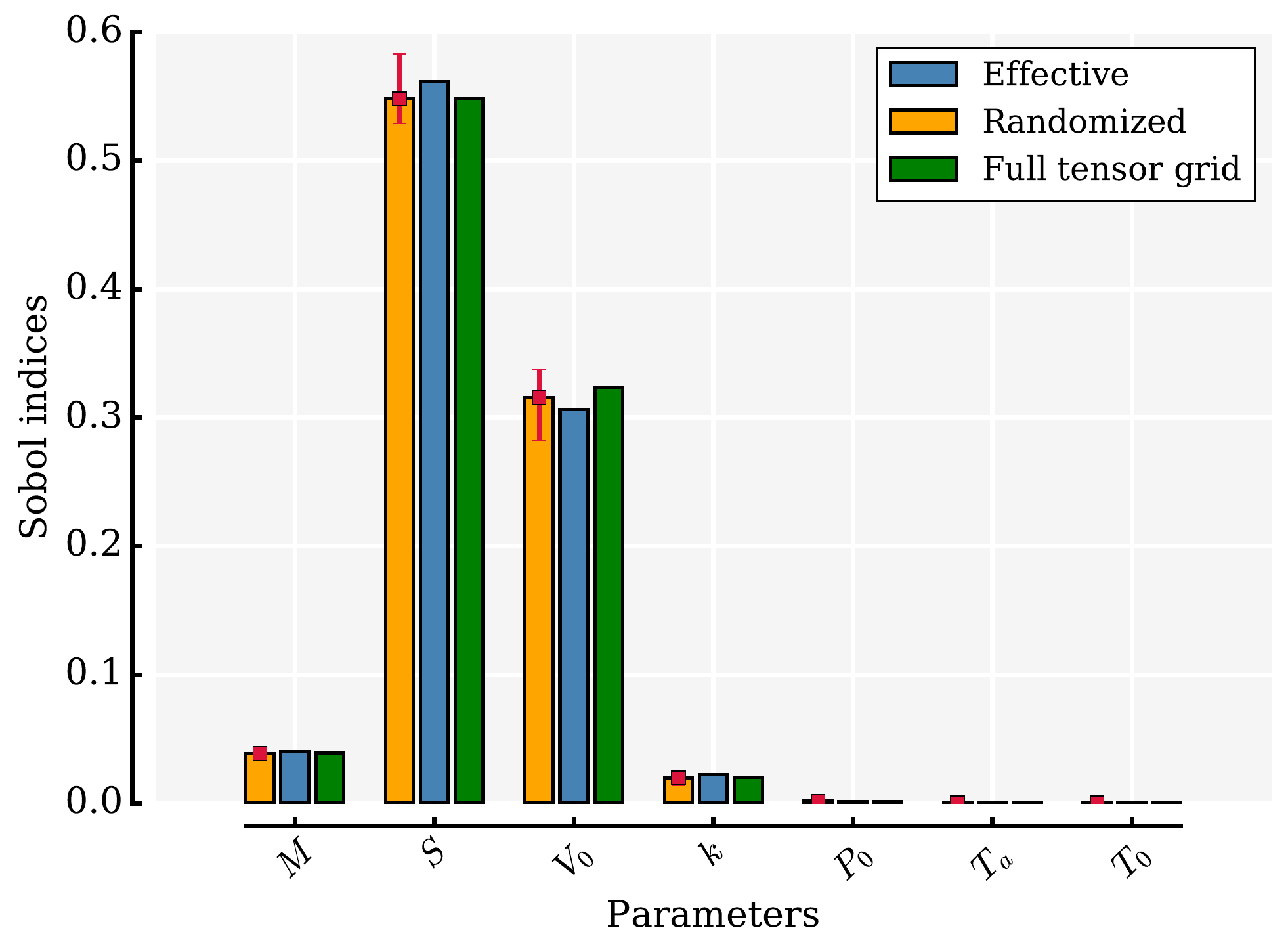}}
\end{subfigmatrix}
\caption{First order Sobol' indices for the seven parameters with $k=4$ for (a) Total order basis with $n/l=1.00$;  (b) Total order basis with $n/l=1.25$. The error bars indicate the minimum and maximum Sobol' indices from the mean (red dot) obtained from 20 repetitions of the randomized approach.}
\label{Sobol}
\end{figure}
These results highlight the point that even with an $n/l=1.0$, effectively subsampled quadratures does yield results that are comparable to those obtained from the full tensor grid.

\section{A problem where effectively subsampled quadratures fails}
\label{sec:fail}
In this section we present an example that illustrates a limitation of our method. Consider the function 
\begin{equation}
f(\bm{\zeta}) = \frac{1}{\left(1 + 50(\zeta_{1} - 0.9)^2 + 50(\zeta_{2} + 0.9)^2     \right)}
\label{prob_equ}
\end{equation}
defined over $\mathcal{R}=[-1,1]^2$. The contours of this function are shown in Figure~\ref{fig7}(a) and its bi-variate Legendre polynomial approximation---obtained by evaluating the function at a tensor grid with 10 points in each direction and computing the coefficients with the corresponding tensor product integration rule---is shown in Figure~\ref{fig7}(b). The function is relatively flat throughout most of its domain, but it exhibits a steep variation in the lower right hand corner around (-0.9, -0.9). 
\begin{figure}
\begin{subfigmatrix}{2}% number of columns
\subfigure[]{\includegraphics{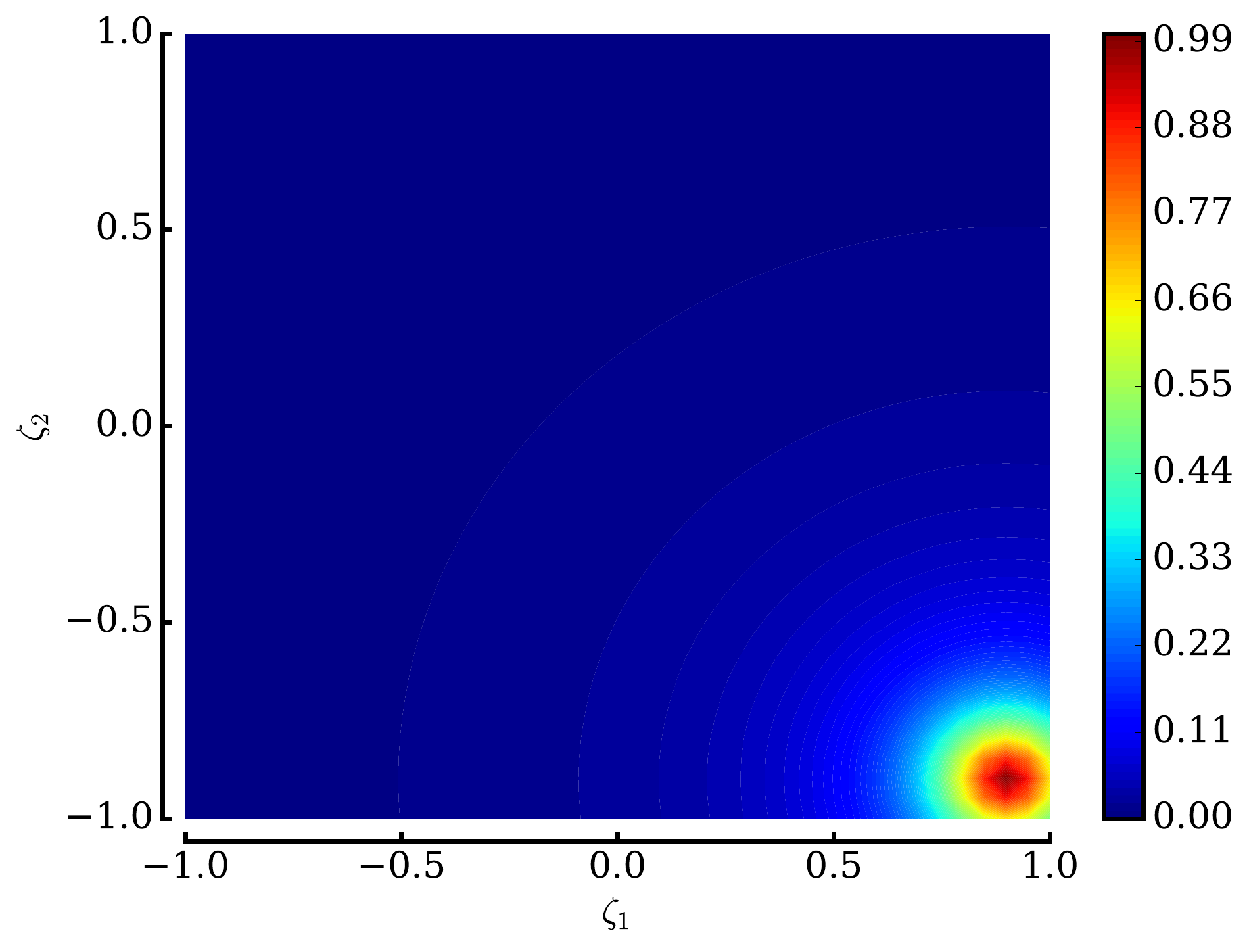}}
\subfigure[]{\includegraphics{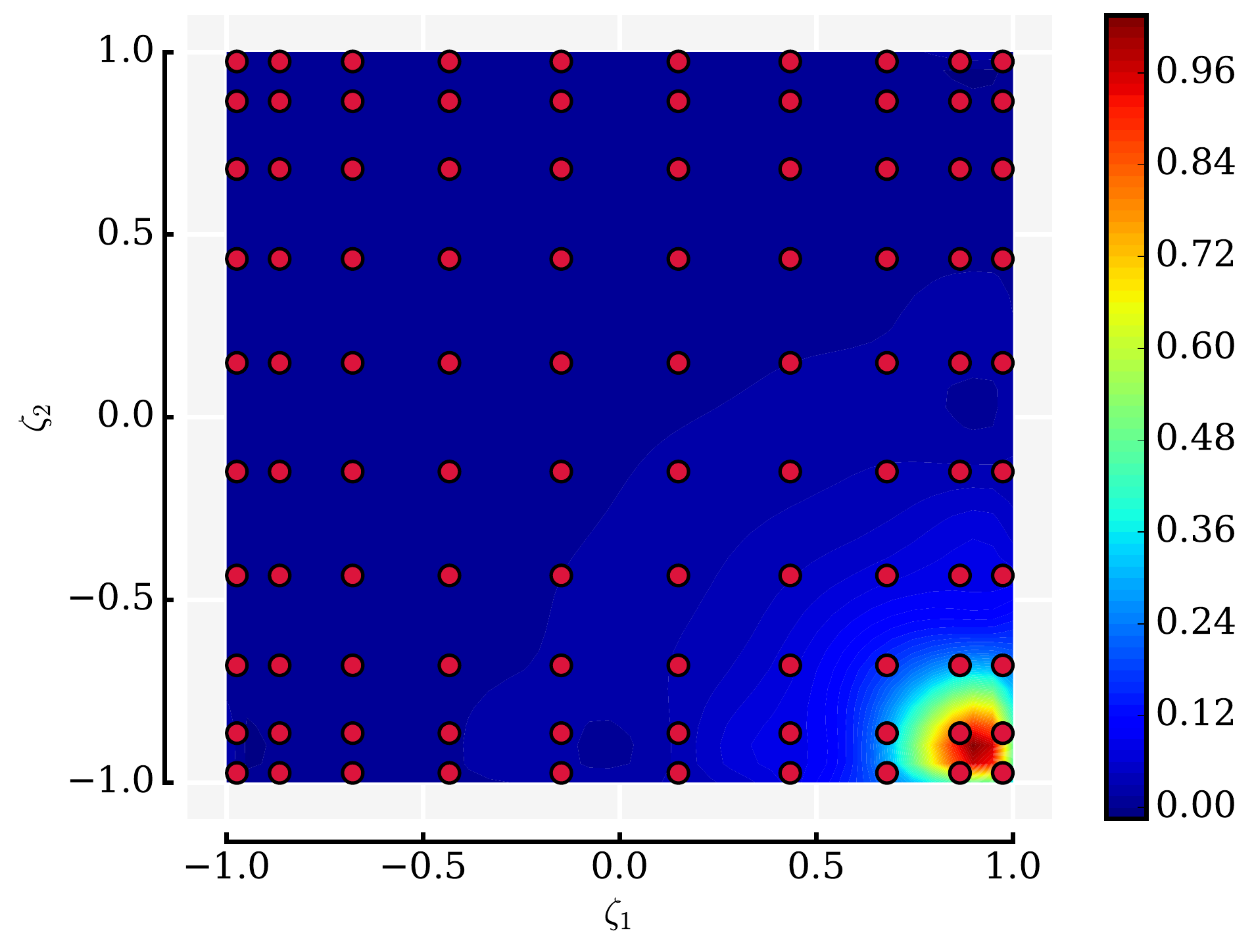}}
\end{subfigmatrix}
\caption{Contour plots for~\eqref{prob_equ} with the real function in (a) and its tensor grid polynomial approximant in (b).}
\label{fig7}
\end{figure}

In Figure~\ref{fig8} we plot the effectively subsampled quadrature approximation with different hyperbolic basis index sets that all have a maximum order of 9 in each direction. Figure~\ref{fig8}(a-b) shows the results for a $q$ factor of 0.3; (c-d) for a $q$ factor of 0.5 and (e-f) for a $q$ factor of 1.0 which is equivalent to a total order basis (For the definition of $q$ see~\eqref{eq:hyperbolic-space}). The tensor grid points in Figures (a,c,e) are given by red circular markers, while the effectively subsampled points---used for generating the polynomial approximation contours---are shown as green ``x'' markers. 

\begin{figure}
\begin{subfigmatrix}{2}% number of columns
\subfigure[]{\includegraphics{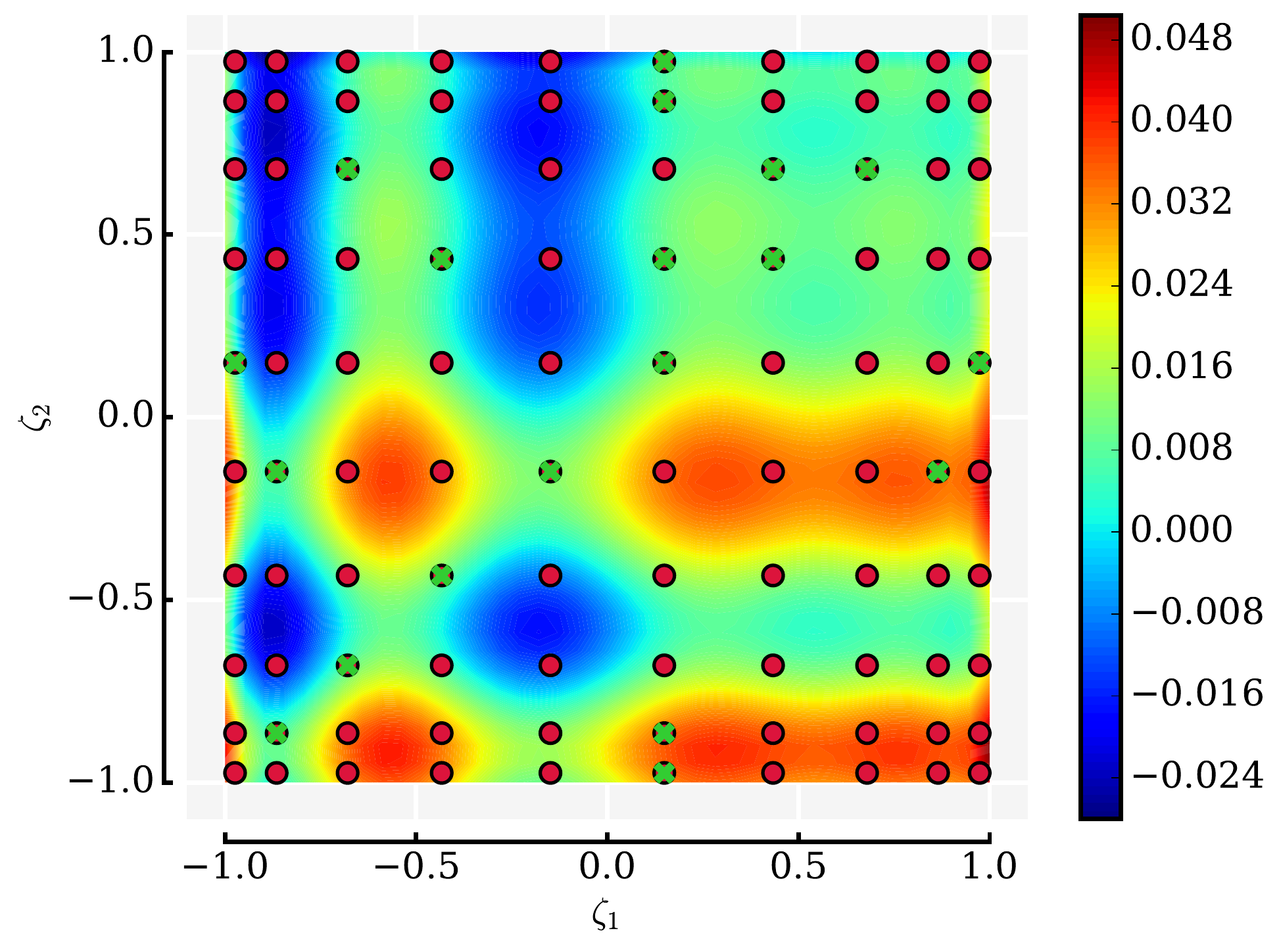}}
\subfigure[]{\includegraphics{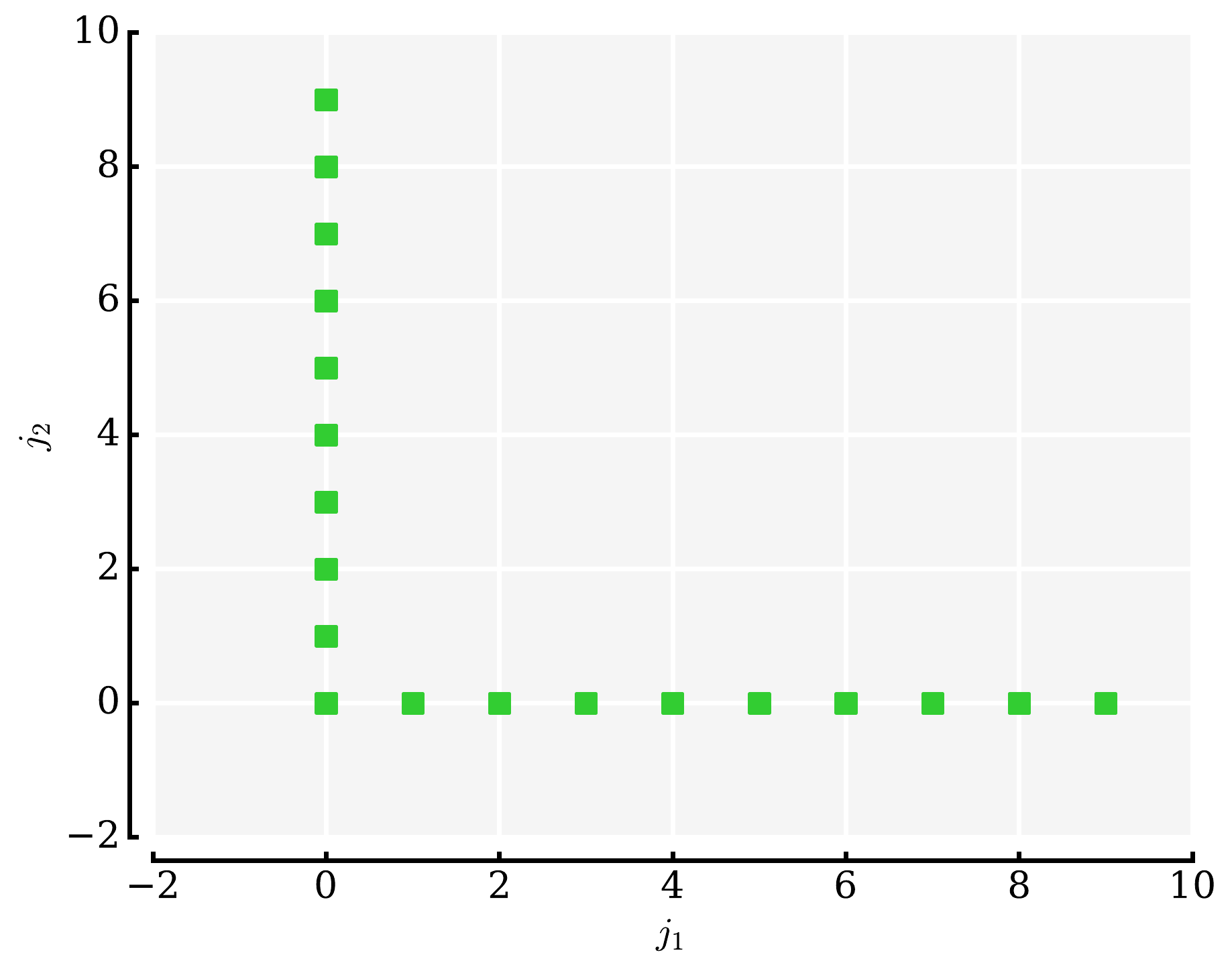}}
\subfigure[]{\includegraphics{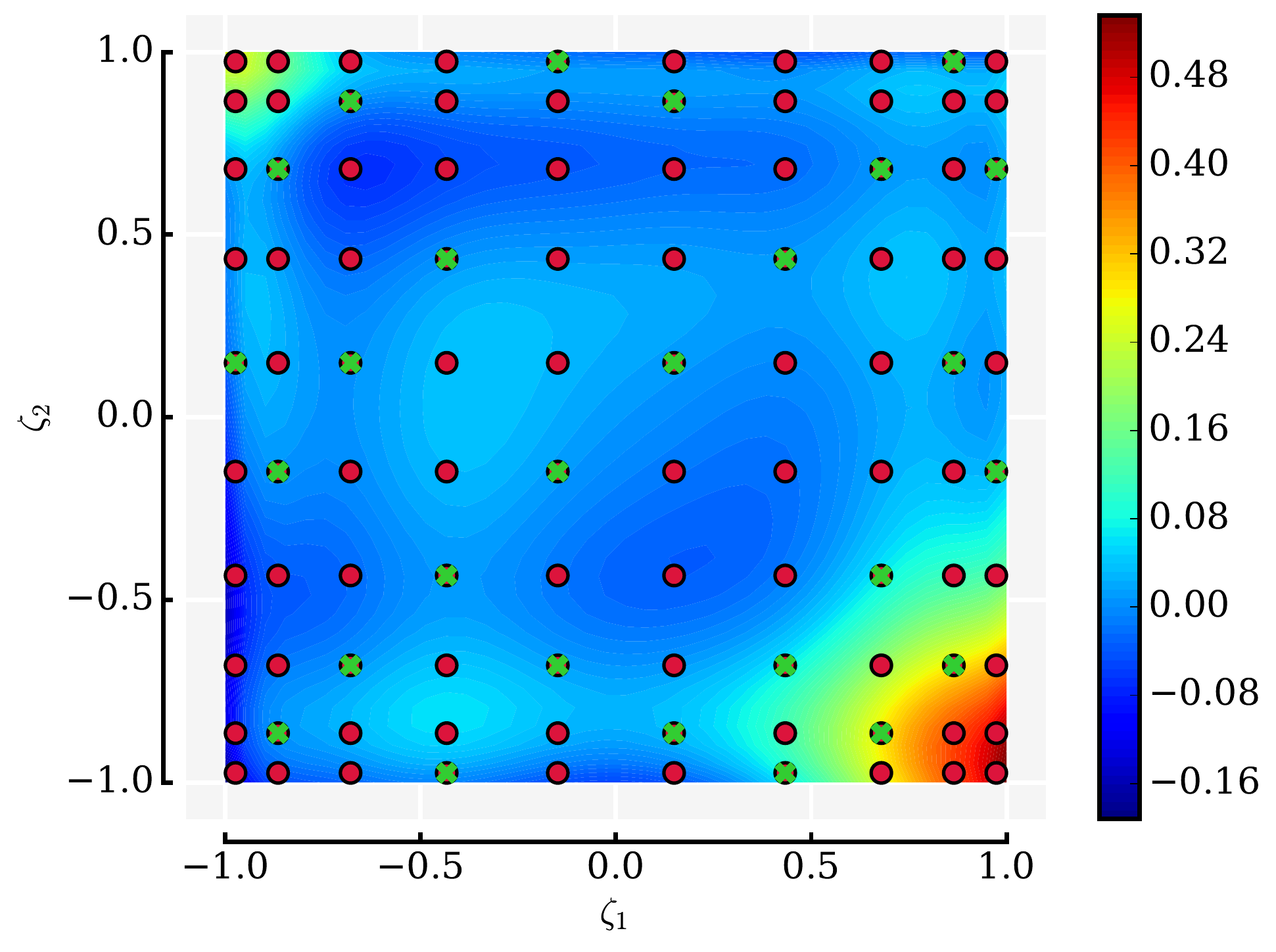}}
\subfigure[]{\includegraphics{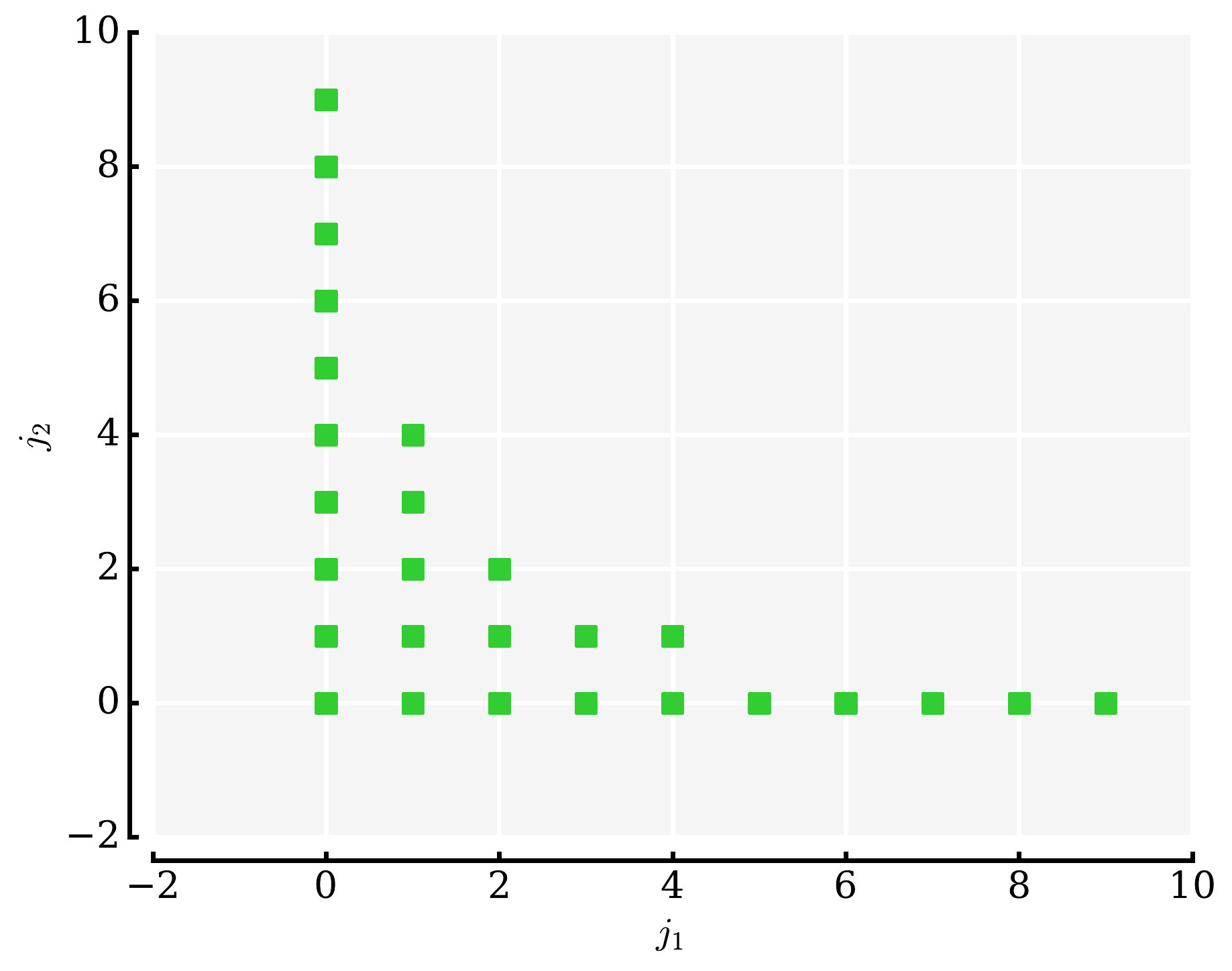}}
\subfigure[]{\includegraphics{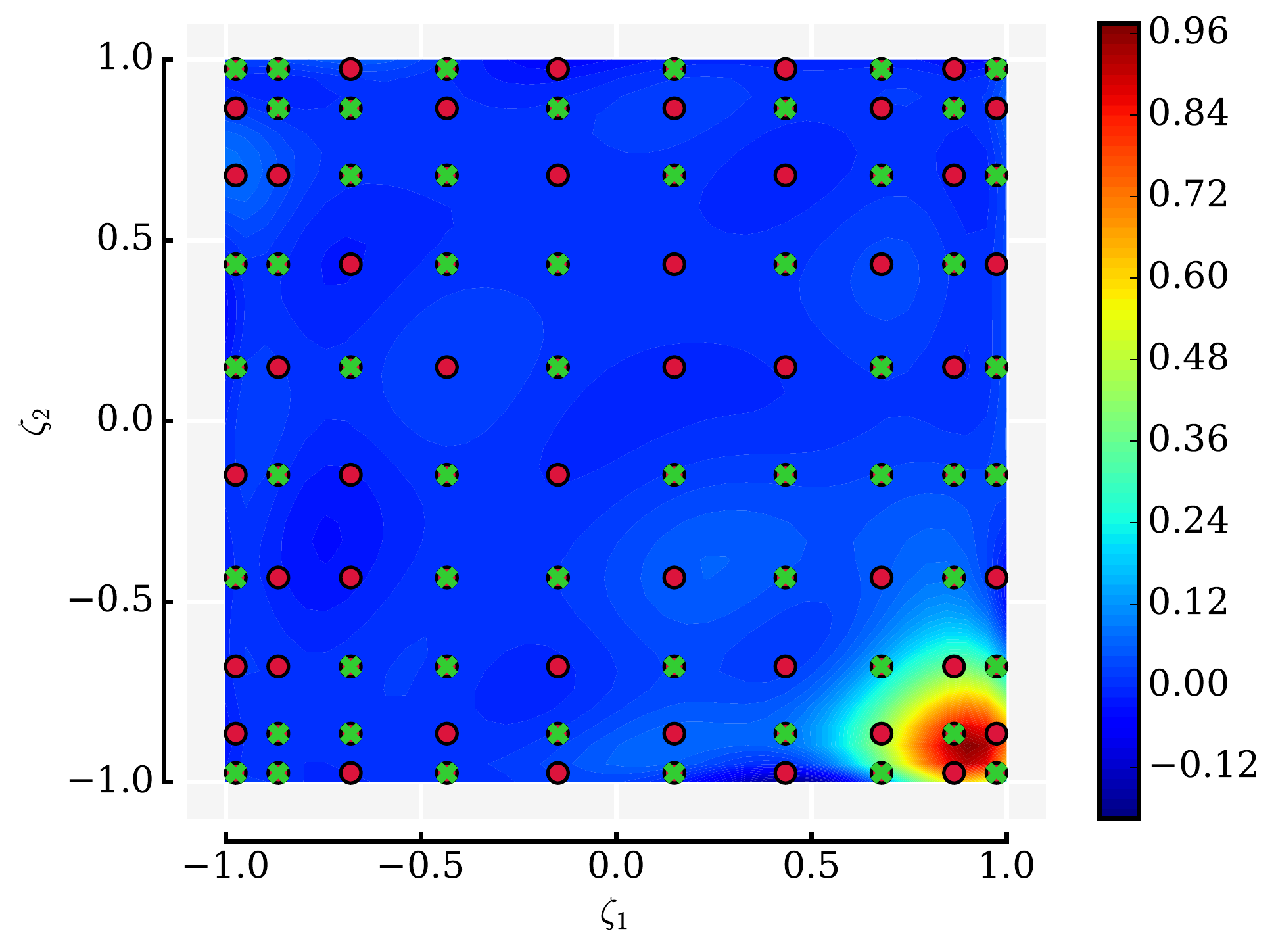}}
\subfigure[]{\includegraphics{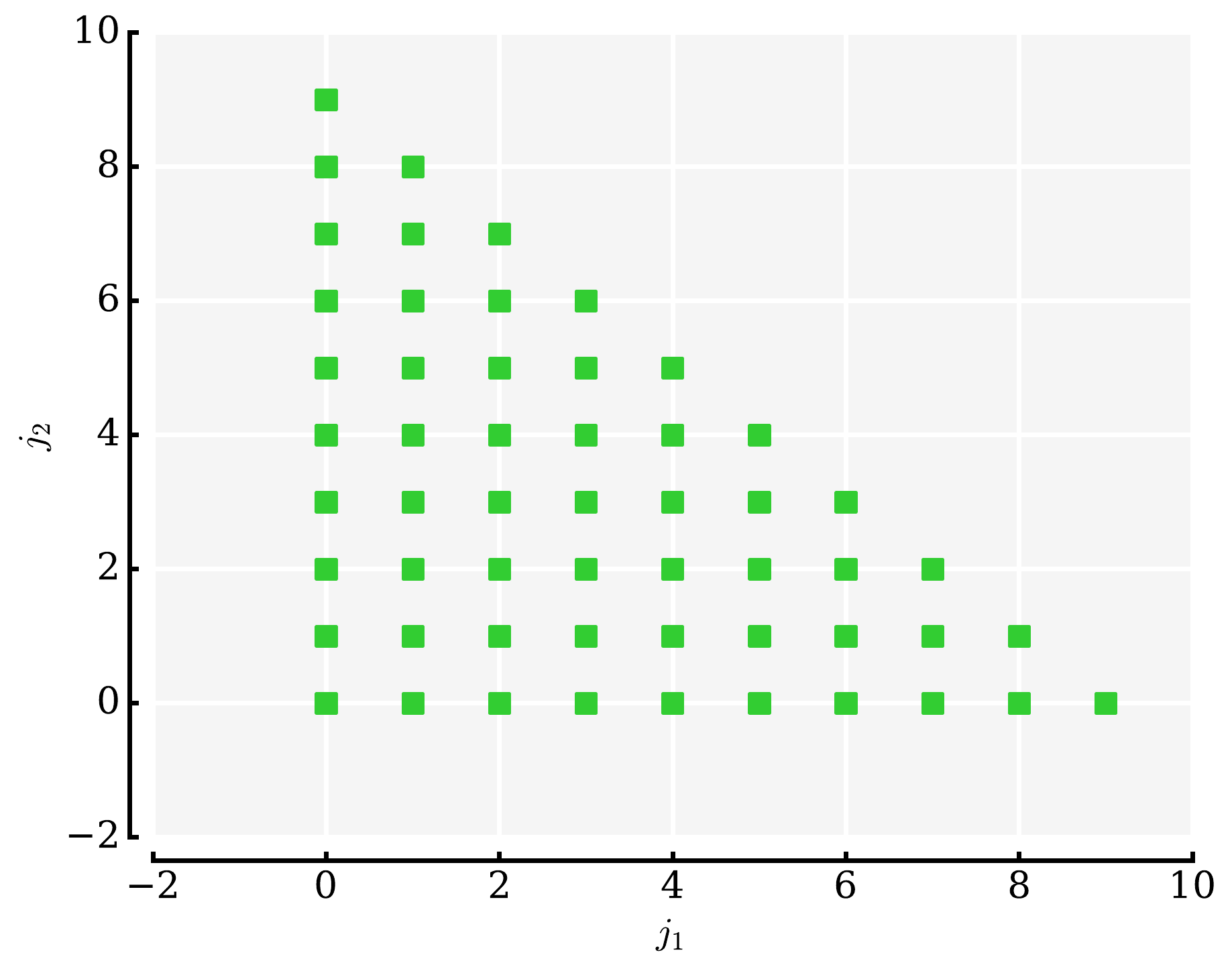}}
\end{subfigmatrix}
\caption{Effectively subsampled quadratures approximations of~\eqref{prob_equ} with the chosen hyperbolic basis with: (a-b) $q=0.3$; (c-d) $q=0.5$; (e-f) $q=1.0$.}
\label{fig8}
\end{figure}

It is clear that in Figure~\ref{fig8}(a) and to a certain extent in (c), the subsampled points do not lie in the vicinity of (-0.9, 0.9), hence the polynomial approximation does a rather poor job of approximating the overall response of the function. This approximation is however improved in Figure~\ref{fig8}(e-f) when using a total order basis, as the subsampled points do lie around (-0.9, 0.9). This illustrates a limitation of effectively subsampled quadratures---i.e., they are based solely on $\rho(\bm{\zeta})$, the choice of the tensor grid and basis---and are invariant to the function response. 

\section{Conclusion}
In this paper, we proposed a new sampling strategy for generating polynomial least squares approximations, titled \emph{effectively subsampled quadratures}. This technique uses a QR column pivoting heuristic for subsampling points from a tensor grid. Numerical results showed the advantages of this technique compared with \emph{randomized quadrature} subsampling. In future work we will investigate the incorporation of gradients. 

\section{Acknowledgements}
The first author would like to thank Paul Constantine for valuable discussions on polynomial least squares, Alireza Doostan for insightful exchanges on compressed sensing strategies with pseudospectral approximations and Tiziano Ghisu for his feedback on the randomized results. Thanks are also due to Gregorio Quintana-Ort\'{i} and Mario Arioli for their insights into QR with column pivoting. The authors also thank the reviewers for their suggestions and comments, which improved the overall quality of this manuscript. 

\section*{Appendix: Computing Sobol' indices}
Consider the polynomial approximation of $f(s)$ provided earlier (see~\eqref{main_poly_equ})
\begin{equation}
f\left(s\right)\approx g(s) = \sum_{\bm{j} \in\mathcal{J}}^{n}x_{\bm{j}} \bm{\psi _j} \left(s\right).
\end{equation}
By using the orthogonality properties of the polynomial basis, the mean $\mu$ and variance $\sigma^2$ of $g(s)$ can be expresed solely using the expansion's coefficients (see \cite{smith2013uncertainty} page 210)
\begin{equation}
\mu=x_{\bm{0}}, \; \; \; \; \sigma^{2}=\sum_{\bm{j}\in\mathcal{J},  \; \bm{j}\neq\bm{0}}x_{\bm{j}}^{2}.
\end{equation}
Recall that Sobol' indices represent a fraction of the total variance that is attributed to each input variable (the first order Sobol' indices) or combinations thereof (higher order Sobol' indices). Let $\mathcal{J}_{\bm{s}}$ be the set of multi-indices that depend only on the subset of variables $\bm{s}=\left\{ j_{1},\ldots,j_{s}\right\}$, i.e.,
\begin{equation}
\mathcal{J}_{\bm{s}}=\left\{ \bm{j} \in\mathbb{N}^{d} \; : \; l\in \bm{s}\Leftrightarrow j_{l}\neq0\right\}.
\end{equation}
The first order partial variances $\sigma_i^2$ are then obtained by summing up the square of the coefficients in $\mathcal{J}_{\bm{s}}$
\begin{equation}
\sigma_{i}^{2}=\sum_{\bm{j} \in\mathcal{J}_{i}} x_{\bm{j}}^{2},   \; \; \; \; \mathcal{J}_{i}=\left\{ \bm{j}\in\mathbb{N}^{d}:j_{i}>0\right\} ,
\end{equation}
and the higher order variances $\sigma_{\left\{ j_{1},\ldots,j_{s}\right\}}^2$ can be written as
\begin{equation}
\sigma_{\bm{s}}^2= \sum_{\bm{j} \in\mathcal{J}_{\bm{s}}  } x_{\bm{j}}^{2},   \; \; \; \; \; \mathcal{J}_{\left\{ j_{1},\ldots,j_{s}\right\}}=\left\{ \bm{j}\in\mathbb{N}^{d}: l \in \bm{s} \Leftrightarrow j_{l}>0\right\}.
\end{equation}
The first and higher order Sobol indices are then given by
\begin{equation}
S_{i}=\frac{\sigma_{i}^{2}}{\sigma^{2}}\; \;  \textrm{and} \; \; S_{\bm{s}}=\frac{\sigma_{\bm{s}}^{2}}{\sigma^{2}}
\end{equation}
respectively (for further details see~\cite{Sudret}).

\end{document}